\newtheorem*{thm*}{Theorem}
\newtheorem{thm}{Theorem}
\newtheorem{lem}[thm]{Lemma}
\newtheorem{pro}[thm]{Proposition}
\newtheorem{obs}[thm]{Observation}
\newcommand{\N}{\mathbb{N}}
\begin{document}

\title{On Polynomial Representations of the DP Color Function: Theta Graphs and Their Generalizations}

\author{Charlie Halberg$^1$, Hemanshu Kaul$^2$, Andrew Liu$^1$, Jeffrey A. Mudrock$^3$, \\ Paul Shin$^1$, and Seth Thomason$^1$}

\footnotetext[1]{Department of Mathematics, College of Lake County, Grayslake, IL 60030.}

\footnotetext[2]{Department of Applied Mathematics, Illinois Institute of Technology, Chicago, IL 60616. E-mail: {\tt {kaul@iit.edu}}}

\footnotetext[3]{Department of Mathematics and Statistics, University of South Alabama, Mobile, AL 36688.  E-mail:  {\tt {mudrock@southalabama.edu}}}

\maketitle

\begin{abstract}
DP-coloring (also called correspondence coloring) is a generalization of list coloring that has been widely studied in recent years after its introduction by Dvo\v{r}\'{a}k and Postle in 2015. As the analogue of the chromatic polynomial $P(G,m)$, the DP color function of a graph $G$, denoted $P_{DP}(G,m)$, counts the minimum number of DP-colorings over all possible $m$-fold covers. It is known that, unlike the list color function  $P_{\ell}(G,m)$, for any $g \geq 3$ there exists a graph $G$ with girth $g$ such that $P_{DP}(G,m) < P(G,m)$ when $m$ is sufficiently large. Thus, two fundamental open questions regarding the DP color function are: (i) for which $G$ does there exist an $N \in \N$ such that $P_{DP}(G,m) = P(G,m)$ whenever $m \geq N$, (ii) Given a graph $G$ does there always exist an $N \in \N$ and a polynomial $p(m)$ such that $P_{DP}(G,m) = p(m)$ whenever $m \geq N$? 

In this paper we give exact formulas for the DP color function of a Theta graph based on the parity of its path lengths. This gives an explicit answer, including the formulas for the polynomials that are not the chromatic polynomial, to both the questions above for Theta graphs. We extend this result to Generalized Theta graphs by characterizing the exact parity condition that ensures the DP color function eventually equals the chromatic polynomial. To answer the second question for Generalized Theta graphs, we confirm it for the larger class of graphs with a feedback vertex set of size one.
\medskip

\noindent {\bf Keywords.}  graph coloring, list coloring, DP-coloring, correspondence coloring, chromatic polynomial, list color function, DP-color function.

\noindent \textbf{Mathematics Subject Classification.} 05C15, 05C30, 05C69

\end{abstract}

\section{Introduction}\label{intro}

In this paper all graphs are nonempty, finite, simple graphs unless otherwise noted.  Generally speaking we follow West~\cite{W01} for terminology and notation.  The set of natural numbers is $\N = \{1,2,3, \ldots \}$.  For $m \in \N$, we write $[m]$ for the set $\{1, \ldots, m \}$.  Given a set $A$, $\mathcal{P}(A)$ is the power set of $A$.  If $G$ is a graph and $S, U \subseteq V(G)$, we use $G[S]$ for the subgraph of $G$ induced by $S$, and we use $E_G(S, U)$ for the set consisting of all the edges in $E(G)$ that have one endpoint in $S$ and the other in $U$.  If $G$ and $H$ are vertex disjoint graphs, we write $G \vee H$ for the graph obtained from $G$ and $H$ by adding edges so that each vertex of $G$ is adjacent to each vertex of $H$ ($G \vee H$ is called the join of $G$ and $H$).

\subsection{List Coloring and DP-Coloring} \label{basic}

In the classical vertex coloring problem we wish to color the vertices of a graph $G$ with up to $m$ colors from $[m]$ so that adjacent vertices receive different colors, a so-called \emph{proper $m$-coloring}. The chromatic number of a graph $G$, denoted $\chi(G)$, is the smallest $m$ such that $G$ has a proper $m$-coloring.  List coloring, a well-known variation on classical vertex coloring, was introduced independently by Vizing~\cite{V76} and Erd\H{o}s, Rubin, and Taylor~\cite{ET79} in the 1970s.  For list coloring, we associate a \emph{list assignment} $L$ with a graph $G$ such that each vertex $v \in V(G)$ is assigned a list of available colors $L(v)$ (we say $L$ is a list assignment for $G$).  Then, $G$ is \emph{$L$-colorable} if there exists a proper coloring $f$ of $G$ such that $f(v) \in L(v)$ for each $v \in V(G)$ (we refer to $f$ as a \emph{proper $L$-coloring} of $G$).  A list assignment $L$ is called a \emph{$k$-assignment} for $G$ if $|L(v)|=k$ for each $v \in V(G)$.  The \emph{list chromatic number} of a graph $G$, denoted $\chi_\ell(G)$, is the smallest $k$ such that $G$ is $L$-colorable whenever $L$ is a $k$-assignment for $G$.  We say $G$ is \emph{$k$-choosable} if $k \geq \chi_\ell(G)$.  Note $\chi(G) \leq \chi_\ell(G)$, and this inequality may be strict since it is known that there are bipartite graphs with arbitrarily large list chromatic number (see~\cite{ET79}).

In 2015, Dvo\v{r}\'{a}k and Postle~\cite{DP15} introduced a generalization of list coloring called DP-coloring (they called it correspondence coloring) in order to prove that every planar graph without cycles of lengths 4 to 8 is 3-choosable. DP-coloring has been extensively studied since 2015 (see e.g.,~\cite{BH21, B16, B17, BK182, BK17, BK18, KM19, KM20, KM21, KO18, KO182, M18, MT20}).  Intuitively, DP-coloring is a variation on list coloring where each vertex in the graph still gets a list of colors, but identification of which colors are different can change from edge to edge.  Following~\cite{BK17}, we now give the formal definition. Suppose $G$ is a graph.  A \emph{cover} of $G$ is a pair $\mathcal{H} = (L,H)$ consisting of a graph $H$ and a function $L: V(G) \rightarrow \mathcal{P}(V(H))$ satisfying the following four requirements:

\vspace{5mm}

\noindent(1) the set $\{L(v) : v \in V(G)\}$ forms a partition of $V(H)$ of size $|V(G)|$;\\
(2) for every $u \in V(G)$, the graph $H[L(u)]$ is complete; \\
(3) if $E_H(L(u),L(v))$ is nonempty, then $u=v$ or $uv \in E(G)$; \\
(4) if $uv \in E(G)$, then $E_H(L(u),L(v))$ is a matching (the matching may be empty).

\vspace{5mm}

Suppose $\mathcal{H} = (L,H)$ is a cover of $G$.  We refer to the edges of $H$ connecting distinct parts of the partition $\{L(v) : v \in V(G) \}$ as \emph{cross-edges}. We say $\mathcal{H}$ is a \emph{full cover} if for each $uv \in E(G)$, the matching $E_{H}(L(u),L(v))$ is perfect. An \emph{$\mathcal{H}$-coloring} of $G$ is an independent set in $H$ of size $|V(G)|$.  It is immediately clear that an independent set $I \subseteq V(H)$ is an $\mathcal{H}$-coloring of $G$ if and only if $|I \cap L(u)|=1$ for each $u \in V(G)$.  We say $\mathcal{H}$ is \emph{$m$-fold} if $|L(u)|=m$ for each $u \in V(G)$.  The \emph{DP-chromatic number} of $G$, $\chi_{DP}(G)$, is the smallest $m \in \N$ such that $G$ has an $\mathcal{H}$-coloring whenever $\mathcal{H}$ is an $m$-fold cover of $G$.

Suppose $\mathcal{H} = (L,H)$ is an $m$-fold cover of $G$.  We say that $\mathcal{H}$ has a \emph{canonical labeling} if it is possible to name the vertices of $H$ so that $L(u) = \{ (u,j) : j \in [m] \}$ and $(u,j)(v,j) \in E(H)$ for each $j \in [m]$ whenever $uv \in E(G)$.\footnote{When $\mathcal{H}=(L,H)$ has a canonical labeling, we will always refer to the vertices of $H$ using this naming scheme.}  Clearly, when $\mathcal{H}$ has a canonical labeling, $G$ has an $\mathcal{H}$-coloring if and only if $G$ has a proper $m$-coloring.  Also, given an $m$-assignment $L$ for a graph $G$, it is easy to construct an $m$-fold cover $\mathcal{H}'$ of $G$ such that $G$ has an $\mathcal{H}'$-coloring if and only if $G$ has a proper $L$-coloring (see~\cite{BK17}).  It follows that $\chi(G) \leq \chi_\ell(G) \leq \chi_{DP}(G)$.  The second inequality may be strict since it is easy to prove that $\chi_{DP}(C_n) = 3$ whenever $n \geq 3$, but the list chromatic number of any even cycle is 2 (see~\cite{BK17} and~\cite{ET79}).

In some instances DP-coloring behaves similar to list coloring, but there are some interesting differences.  Thomassen~\cite{T94} famously proved that every planar graph has list chromatic number at most 5, and Dvo\v{r}\'{a}k and Postle~\cite{DP15} observed that the DP-chromatic number of every planar graph is at most 5.  Also, Molloy~\cite{M17} recently improved a theorem of Johansson by showing that every triangle-free graph $G$ with maximum degree $\Delta(G)$ satisfies $\chi_\ell(G) \leq (1 + o(1)) \Delta(G)/ \log(\Delta(G))$.  Bernshteyn~\cite{B17} subsequently showed that this bound also holds for the DP-chromatic number.  On the other hand, Bernshteyn~\cite{B16} showed that if the average degree of a graph $G$ is $d$, then $\chi_{DP}(G) = \Omega(d/ \log(d))$.  This is in striking contrast to the result that says $\chi_{\ell}(G) = O(\log(d))$ when $G$ is a complete bipartite graph~\cite{ET79}, and Alon~\cite{A00} showed that $\chi_\ell(G) = \Omega(\log(d))$ for any graph $G$.  It was also recently shown in~\cite{BK17} that there exist planar bipartite graphs with DP-chromatic number 4 even though the list chromatic number of any planar bipartite graph is at most 3~\cite{AT92}.  
 
\subsection{Counting Proper Colorings, List Colorings, and DP-Colorings}

In 1912 Birkhoff introduced the notion of the chromatic polynomial in hopes of using it to make progress on the four color problem.  For $m \in \N$, the \emph{chromatic polynomial} of a graph $G$, $P(G,m)$, is the number of proper $m$-colorings of $G$. It is well-known that $P(G,m)$ is a polynomial in $m$ of degree $|V(G)|$ (see~\cite{B12}).  Furthermore, both the chromatic polynomial and its generalizations are some of the central objects of study in Algebraic Combinatorics. 

The notion of chromatic polynomial was extended to list coloring in the early 1990s~\cite{AS90}. If $L$ is a list assignment for $G$, we use $P(G,L)$ to denote the number of proper $L$-colorings of $G$. The \emph{list color function} $P_\ell(G,m)$ is the minimum value of $P(G,L)$ where the minimum is taken over all possible $m$-assignments $L$ for $G$.  It is clear that $P_\ell(G,m) \leq P(G,m)$ for each $m \in \N$ since we must consider the $m$-assignment that assigns the same $m$ colors to all the vertices in $G$ when considering all possible $m$-assignments for $G$.  In general, the list color function can differ significantly from the chromatic polynomial for small values of $m$.  However, for large values of $m$,  Dong and Zhang~\cite{DZ22} (improving upon results in~\cite{D92}, \cite{T09}, and~\cite{WQ17}) showed the following in 2022.

\begin{thm} [\cite{DZ22}] \label{thm: WQ17}
For any graph $G$ with at least 4 edges, $P_{\ell}(G,m)=P(G,m)$ whenever $m \geq |E(G)|-1$.
\end{thm}

It is also known that $P_{\ell}(G,m)=P(G,m)$ for all $m \in \N$ when $G$ is a cycle or chordal (see~\cite{KN16} and~\cite{AS90}).  Moreover, if $P_{\ell}(G,m)=P(G,m)$ for all $m \in \N$, then $P_{\ell}(K_n \vee G,m)=P(K_n \vee G,m)$ for each $n, m \in \N$ (see~\cite{KM18}). See~\cite{T09} for a survey of known results and open questions on the list color function.

Two of the current authors (Kaul and Mudrock in~\cite{KM19}) introduced a DP-coloring analogue of the chromatic polynomial in hopes of gaining a better understanding of DP-coloring and using it as a tool for making progress on some open questions related to the list color function.  Suppose $\mathcal{H} = (L,H)$ is a cover of graph $G$.  Let $P_{DP}(G, \mathcal{H})$ be the number of $\mathcal{H}$-colorings of $G$.  Then, the \emph{DP color function} of $G$, $P_{DP}(G,m)$, is the minimum value of $P_{DP}(G, \mathcal{H})$ where the minimum is taken over all possible $m$-fold covers $\mathcal{H}$ of $G$.~\footnote{We take $\N$ to be the domain of the DP color function of any graph.} It is easy to show that for any graph $G$ and $m \in \N$, $P_{DP}(G, m) \leq P_\ell(G,m) \leq P(G,m)$.~\footnote{To prove this, recall that for any $m$-assignment $L$ for $G$, an $m$-fold cover $\mathcal{H}'$ of $G$ such that $G$ has an $\mathcal{H}'$-coloring if and only if $G$ has a proper $L$-coloring is constructed in~\cite{BK17}. It is easy to see from the construction in~\cite{BK17} that there is a bijection between the proper $L$-colorings of $G$ and the $\mathcal{H}'$-colorings of $G$.} Note that if $G$ is a disconnected graph with components: $H_1, H_2, \ldots, H_t$, then $P_{DP}(G, m) = \prod_{i=1}^t P_{DP}(H_i,m)$.  So, we will only consider connected graphs from this point forward unless otherwise noted.

As with list coloring and DP-coloring, the list color function and DP color function of certain graphs behave similarly.  However, for some graphs there are surprising differences.  For example, similar to the list color function,  $P_{DP}(G,m) = P(G,m)$ for every $m \in \N$  whenever $G$ is chordal or an odd cycle~\cite{KM19}.  On the other hand, we have the following result.

\begin{thm} [\cite{KM19}] \label{thm: evengirth}
If $G$ is a graph with girth that is even, then there is an $N \in \N$ such that $P_{DP}(G,m) < P(G,m)$ whenever $m \geq N$.  Furthermore, for any integer $g \geq 3$ there exists a graph $M$ with girth $g$ and an $N \in \N$ such that $P_{DP}(M,m) < P(M,m)$ whenever $m \geq N$. 
\end{thm}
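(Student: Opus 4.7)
The plan is to exhibit an $m$-fold cover $\mathcal{H}$ of $G$ whose number of $\mathcal{H}$-colorings is strictly less than $P(G,m)$ for all sufficiently large $m$, giving $P_{DP}(G,m) \le P_{DP}(G,\mathcal{H}) < P(G,m)$. I would let $C = v_0 v_1 \cdots v_{g-1} v_0$ be a girth cycle of $G$, set $e^* = v_0 v_{g-1}$, and take $\mathcal{H}$ with a canonical labeling and canonical matchings on every edge of $G$ except $e^*$; on $e^*$, use the matching induced by the transposition $\sigma = (1\;2) \in S_m$. Then an $\mathcal{H}$-coloring is exactly a proper $m$-coloring $f$ of $G-e^*$ satisfying $f(v_0) \ne \sigma(f(v_{g-1}))$; comparing to proper $m$-colorings of $G$ (which require $f(v_0) \ne f(v_{g-1})$) and exploiting color symmetry yields
\[
P(G,m) - P_{DP}(G,\mathcal{H}) = 2(a_{\text{diff}} - a_{\text{same}}),
\]
where $a_{\text{diff}}$ (resp.\ $a_{\text{same}}$) is the number of proper $m$-colorings of $G-e^*$ with $f(v_0)=1,\ f(v_{g-1})=2$ (resp.\ $f(v_0)=f(v_{g-1})=1$). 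Since $P(G,m) = m(m-1)\,a_{\text{diff}}$ and $P(G/e^*,m) = m\,a_{\text{same}}$, this rearranges to
\[
P(G,m) - P_{DP}(G,\mathcal{H}) = \frac{2\bigl(P(G,m) - (m-1)P(G/e^*,m)\bigr)}{m(m-1)},
\]
reducing the theorem to showing that $q(m) := P(G,m) - (m-1)P(G/e^*,m)$ is eventually positive.

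The crux is to identify the leading nonzero coefficient of $q(m)$. Writing $P(H,m) = \sum_k (-1)^k a_k(H)\,m^{|V(H)|-k}$ and setting $n = |V(G)|$ and $l = |E(G)|$, Whitney's broken-circuit expansion gives $a_k(H) = \binom{|E(H)|}{k}$ whenever $k \le \mathrm{girth}(H) - 2$. Since $G$ has girth $g$ and $G/e^*$ has girth $g-1$ (each $g$-cycle through $e^*$ contracts to a $(g-1)$-cycle in $G/e^*$, and girth $g$ in $G$ rules out shorter cycles in $G/e^*$), and since the $m^{n-k}$ coefficient of $(m-1)P(G/e^*,m)$ equals $(-1)^k\bigl(a_k(G/e^*)+a_{k-1}(G/e^*)\bigr)$, Pascal's identity $\binom{l-1}{k}+\binom{l-1}{k-1}=\binom{l}{k}$ forces the $m^{n-k}$ coefficients of $P(G,m)$ and $(m-1)P(G/e^*,m)$ to agree for all $0 \le k \le g-3$. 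At $k = g-2$ the first correction $-C_{g-1}(G/e^*)$ to $a_{g-2}(G/e^*)$ appears, where $C_{g-1}(G/e^*)$ counts the $(g-1)$-cycles of $G/e^*$. A bijection identifies $(g-1)$-cycles of $G/e^*$ with $g$-cycles of $G$ through $e^*$ (any $(g-1)$-cycle of $G/e^*$ avoiding the contracted vertex would descend to a $(g-1)$-cycle of $G$, contradicting girth $g$), so $C_{g-1}(G/e^*) \ge 1$. Combined with $(-1)^{g-2} = 1$ since $g$ is even, the $m^{n-g+2}$ coefficient of $q(m)$ is $C_{g-1}(G/e^*) \ge 1$, so $q(m) > 0$ for all sufficiently large $m$.

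For the second statement, if $g$ is even I would take $M = C_g$; the first part applies, and in fact the direct check gives $q(m) = m(m-1) > 0$. If $g$ is odd I would take $M$ to be the union of vertex-disjoint copies of $C_g$ and $C_{g+1}$ connected by a single bridge edge, so $M$ has girth $g$ and contains the even cycle $C_{g+1}$. Choosing $e^*$ on $C_{g+1}$ makes $C_{g+1}$ the unique cycle of $M$ through $e^*$, and the classical identity $P(M',m) = (m-1)P(G_1,m)P(G_2,m)/m$ (for $M'$ the union of connected graphs $G_1,G_2$ joined by a bridge) applied to both $M$ and $M/e^*$ (the latter being two copies of $C_g$ joined by a bridge) yields the clean identity $P(M,m) - (m-1)P(M/e^*,m) = (m-1)^2 P(C_g,m)$, positive for every $m \ge 3$. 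The main technical obstacle is the coefficient computation in the first part: verifying that the Whitney-expansion coefficients match through order $m^{n-g+1}$ and that the leading discrepancy at $m^{n-g+2}$ is precisely the positive integer $C_{g-1}(G/e^*)$, via the bijection between $(g-1)$-cycles of $G/e^*$ and $g$-cycles of $G$ through $e^*$.
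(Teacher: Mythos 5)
Your argument is correct and is essentially the approach of the source: the theorem is quoted from \cite{KM19} rather than proved in this paper, but your transposition-cover reduction is precisely the content of Lemma~\ref{lem: oneedge} (by deletion--contraction, $P(G-e^*,m)=P(G,m)+P(G/e^*,m)$, so your condition $q(m)>0$ is equivalent to $P(G-e^*,m)<\frac{m}{m-1}P(G,m)$). The broken-circuit coefficient comparison, which locates the first discrepancy at $m^{n-g+2}$ with coefficient equal to the number of girth cycles through $e^*$, together with an explicit even-cycle-bearing gadget for odd $g$, is exactly how the cited proof verifies that condition.
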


This result is particularly surprising since Theorem~\ref{thm: WQ17} implies that the list color function of any graph eventually equals its chromatic polynomial. Consequently, two fundamental questions regarding the DP color function are: (i) for which $G$ does there exist an $N \in \N$ such that $P_{DP}(G,m) = P(G,m)$ whenever $m \geq N$, (ii) given a graph $G$ does there always exist an $N \in \N$ and a polynomial $p(m)$ such that $P_{DP}(G,m) = p(m)$ whenever $m \geq N$?  

Question (i) has been answered in affirmative for unicyclic graphs containing an odd cycle, chordal graphs~\cite{KM19}, and graphs with a universal vertex~\cite{MT20}. From a result in~\cite{MT20}, it is known that for every graph $G$ with $n$ vertices, $P(G,m)-P_{DP}(G,m) = O(m^{n-3})$ as $m \rightarrow \infty$; it follows that for any graph $G$ for which Question (ii) is true, the polynomial $p(m)$ will have the same three terms of highest degree as $P(G,m)$. 

\subsection{Summary of Results}

We study the two questions stated above in the context of Generalized Theta graphs and graphs with a feedback vertex set of size 1 which is a family of graphs that contains Generalized Theta graphs.  A \emph{feedback vertex set} of a graph is a subset of vertices whose removal makes the resulting induced subgraph acyclic. Clearly, a Generalized Theta graph has a feedback vertex set of size one.

A \emph{Generalized Theta graph} $\Theta(l_1, \ldots, l_k)$ consists of a pair of end vertices joined by $k$ internally disjoint paths of lengths $l_1, \ldots, l_k \in \N$. When $k=3$, $\Theta(l_1, l_2, l_3)$ is simply called a \emph{Theta graph}.  It is well-known (see~\cite{BH01}) that if $G = \Theta(l_1, \ldots, l_k)$, then for each $m \in \N$ satisfying $m \geq 2$, 
\[P(G,m) = \frac{\prod_{i=1}^{k}((m-1)^{l_i+1}+(-1)^{l_i+1}(m-1))}{(m(m-1))^{k-1}} + \frac{\prod_{i=1}^k((m-1)^{l_i}+(-1)^{l_i}(m-1))}{m^{k-1}}.\]

Generalized Theta graphs, which have been widely studied for many graph theoretic problems (see e.g.,~\cite{BF16, CM15, ET79, LB16, LB19, SJ18}), are the main subject of two classical papers on the chromatic polynomial~\cite{BH01} and~\cite{S04} which include the celebrated result that the zeros of the chromatic polynomials of the Generalized Theta graphs are dense in the whole complex plane with the possible exception of the unit disc around the origin (by including the join of Generalized Theta graphs with $K_2$ this extends to all of the complex plane). It is natural to study the DP color function of these graphs, independent of our motivating questions. 

As discussed in the previous section, we know $P_{DP}(G,m) = P(G,m)$ is not always true by Theorem~\ref{thm: evengirth}. But even if $P_{DP}(G,m) < P(G,m)$, does there always exist a polynomial $p(m)$ and $N \in \N$ such that $P_{DP}(G,m) = p(m)$ whenever $m \geq N$? This phenomenon is illustrated in~\cite{KM19} which gives exact formulas for $P_{DP}(G,m)$ when $G = \Theta(1, l_2, l_3)$ and $l_2, l_3 \geq 2$ by using a lemma (Lemma~\ref{lem: complex} below) that gives a method for computing the DP-color function of graphs that are up to a couple of edges away from being acyclic. In Section~\ref{Theta}, we extend this result to the remaining Theta graphs by a careful application of Lemma~\ref{lem: complex}.

\begin{thm} \label{thm: three theta}
	Suppose $G = \Theta(l_1, l_2, l_3)$ and $2 \leq l_1 \leq l_2 \leq l_3$. 
	
	\textit{(i)} If the parity of $l_1$ is different from both $l_2$ and $l_3$, then $P_{DP}(G,m) = P(G,m)$ for all $m\in \mathbb{N}$.
	
	\textit{(ii)} If the parity of $l_1$ is the same as $l_2$ and different from $l_3$, then  for $m \ge 2$:\\ $P_{DP}(G,m) = \dfrac{1}{m}\left[ (m-1)^{l_1+l_2+l_3} + (m-1)^{l_1} - (m-1)^{l_2+1} - (m-1)^{l_3} + (-1)^{l_3+1}(m-2) \right]$.
	
	\textit{(iii)} If the parity of $l_1$ is the same as $l_3$ and different from $l_2$, then for $m \ge 2$:\\ $P_{DP}(G,m) = \dfrac{1}{m}\left[ (m-1)^{l_1+l_2+l_3} + (m-1)^{l_1} - (m-1)^{l_3+1} - (m-1)^{l_2} + (-1)^{l_2+1}(m-2) \right]$.
	
	\textit{(iv)} If $l_1$, $l_2$ and $l_3$ all have the same parity, then for $m \geq 3$: \\ $P_{DP}(G,m) = \dfrac{1}{m}\left[ (m-1)^{l_1+l_2+l_3} - (m-1)^{l_1} - (m-1)^{l_2} -(m-1)^{l_3} + 2(-1)^{l_1+l_2+l_3} \right].$
\end{thm}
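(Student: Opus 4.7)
The strategy is to apply Lemma~\ref{lem: complex} to $G=\Theta(l_1,l_2,l_3)$ equipped with an arbitrary full $m$-fold cover, and then minimize over the cover by a parity-driven case analysis. First I would pick a spanning tree equal to $P_2$ together with $P_1$ and $P_3$ minus their respective edges at $v$, and perform the standard gauge transformation so that every tree-edge matching becomes the identity. The remaining cover data is a pair of permutations $\mu_1,\mu_3\in S_m$ attached to the two non-tree edges. Writing $\alpha_l$ and $\beta_l$ for the number of proper $m$-colorings of a length-$l$ path whose endpoints are forced to, respectively, equal or distinct colors, Lemma~\ref{lem: complex} rewrites
\[
P_{DP}(G,\mathcal{H}) \;=\; \sum_{x,y\in[m]} A_1(x,y)\,A_2(x,y)\,A_3(x,y),
\]
where $A_i(x,y)=\alpha_{l_i}$ if the $i$-th endpoint-identification condition holds ($y=x$ for $i=2$ and $y=\mu_i(x)$ for $i=1,3$) and $A_i(x,y)=\beta_{l_i}$ otherwise.

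Using $\alpha_l-\beta_l=(-1)^l$ and expanding by inclusion-exclusion, this sum reduces to a cover-independent part $M$ plus
\[
r_1(-1)^{l_1+l_2}\beta_{l_3}\;+\;r_3(-1)^{l_2+l_3}\beta_{l_1}\;+\;s(-1)^{l_1+l_3}\beta_{l_2}\;+\;t(-1)^{l_1+l_2+l_3},
\]
where $r_1=|\mathrm{Fix}(\mu_1)|$, $r_3=|\mathrm{Fix}(\mu_3)|$, $s=|\mathrm{Fix}(\mu_3^{-1}\mu_1)|$, and $t=|\mathrm{Fix}(\mu_1)\cap\mathrm{Fix}(\mu_3)|$. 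Minimizing $P_{DP}(G,\mathcal{H})$ thus becomes a bounded integer linear program on $(r_1,r_3,s,t)$ subject to $0\le r_1,r_3,s\le m$, $t\le\min(r_1,r_3,s)$, and the feasibility inequality $r_1+r_3+s-2t\le m$, which follows by decomposing $\mathrm{Fix}(\mu_3^{-1}\mu_1)$ into $\mathrm{Fix}(\mu_1)\cap\mathrm{Fix}(\mu_3)$ together with coincidences of $\mu_1$ and $\mu_3$ off their fixed-point sets.

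The parity profile of $(l_1,l_2,l_3)$ determines the four signs and yields the case analysis. In case (i) a direct check of the polytope vertices shows the minimum is attained at the canonical $(m,m,m,m)$, so $P_{DP}(G,m)=P(G,m)$. In case (ii) the minimum sits at $(0,0,m,0)$, realized by taking $\mu_1=\mu_3$ to be any common derangement, giving correction $-m\beta_{l_2}$. In case (iii) the minimum sits at $(m,0,0,0)$, realized by $\mu_1=\mathrm{id}$ and $\mu_3$ a derangement---the asymmetry arising from $\beta_{l_3}\ge \beta_{l_1}$---and gives correction $-m\beta_{l_3}$. In case (iv) the minimum sits at $(0,0,0,0)$, realized for $m\ge 3$ by derangements $\mu_1,\mu_3$ whose ratio $\mu_3^{-1}\mu_1$ is itself a derangement, and gives correction $0$. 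In each case, substituting $\alpha_l=((m-1)^l+(-1)^l(m-1))/m$ and $\beta_l=((m-1)^l+(-1)^{l+1})/m$ into $M$ plus the optimal correction and simplifying yields the stated closed form.

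The main obstacle is confirming optimality over the realizability polytope: not every quadruple $(r_1,r_3,s,t)\in[0,m]^4$ is realized by some $(\mu_1,\mu_3)\in S_m\times S_m$, and the inequality $r_1+r_3+s-2t\le m$ is the nontrivial constraint that prevents a naive coordinate-wise extremization from going too far---this is especially delicate in the all-same-parity case where the coefficient of $t$ can differ in sign from the others. A secondary hurdle is the algebraic bookkeeping in cases (ii) and (iii): the factors $(-1)^{l_i+1}$ inside the $\beta_{l_i}$'s must combine carefully with the correction term $-m^2\beta_{l_2}$ (respectively $-m^2\beta_{l_3}$) so that the cubic-in-$\beta$ expansion of $mM$ telescopes down to the stated asymmetric $(m-1)^{l_2+1}$ (respectively $(m-1)^{l_3+1}$) terms together with the residual constant $(-1)^{l_3+1}(m-2)$ (respectively $(-1)^{l_2+1}(m-2)$).
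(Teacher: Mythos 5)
Your argument is correct in outline but takes a genuinely different route from the paper, and one step is asserted rather than proved. The paper invokes Lemma~\ref{lem: complex} as a black box: it deletes the two edges of a path $\alpha_1 u \alpha_3$ at the branch vertex $u$, reduces everything to the five quantities $A_1,\dots,A_5$ expressed through chromatic polynomials of $G, G_0, G_1, G_2, G^*$, and then the entire proof is a parity-driven comparison of the $A_i$ (carried out in the Appendix). You instead re-derive the count from scratch: after gauge-fixing a spanning tree via Proposition~\ref{pro: tree}, you write $P_{DP}(G,\mathcal{H})$ as a transfer-matrix sum over the colors of the two branch vertices, expand via $\alpha_l-\beta_l=(-1)^l$, and reduce to minimizing a linear function of $(r_1,r_3,s,t)=(|\mathrm{Fix}\mu_1|,|\mathrm{Fix}\mu_3|,|\mathrm{Fix}(\mu_3^{-1}\mu_1)|,|\mathrm{Fix}\mu_1\cap\mathrm{Fix}\mu_3|)$. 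This is self-contained and arguably more transparent about where the five cases of the paper's lemma come from; what it costs you is that the optimization is the whole proof, and you have only asserted its outcome. Three caveats. First, the displayed sum is not what Lemma~\ref{lem: complex} says --- you are not using that lemma at all and must justify the identity yourself (easy, but it should be said). Second, the claim ``a direct check of the polytope vertices shows the minimum is attained at\dots'' is exactly the nontrivial content: I checked that your constraints ($t\le\min(r_1,r_3,s)$ and $r_1+r_3+s-2t\le m$, which indeed follows from inclusion--exclusion on the three fixed-point sets whose pairwise intersections all equal the triple one) do force the claimed optima, but the verification needs quantitative inequalities such as $\beta_{l_j}\ge\beta_{l_i}+1$ for $l_i<l_j$, which hold for $m\ge 3$, $l_i\ge 2$ but must be proved; this is work comparable to the paper's Appendix and cannot be waved away as a ``direct check.'' (You also need the trivial cases $m\le 2$ separately, as the paper does in one line.) Third, your closing paragraph writes the correction as $-m^2\beta_{l_2}$ where your own case analysis (correctly) gives $-m\beta_{l_2}$; with the factor $m$, substituting $m\beta_l=(m-1)^l+(-1)^{l+1}$ does reproduce the stated formulas in all four cases, so this is only a slip, but the $m^2$ version would not telescope.
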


This result illustrates the complex relationship that the DP color function has with the structure of odd and even cycles in a graph. From Theorem~\ref{thm: evengirth} we know that girth being even forces $P_{DP}(G,m) < P(G,m)$ for sufficiently large $m$, but the Theorem~\ref{thm: three theta} shows girth being odd can lead to complicated behavior. Despite this, in each of the four cases we see that $P_{DP}(G,m)$ equals a polynomial for sufficiently large $m$. To handle Generalized Theta graphs, we need new ideas as the tools from~\cite{KM19} are not sufficient for dealing with graphs that require the removal of many edges to make them acyclic. 

In Section~\ref{GenTheta}, we prove the following.

\begin{thm} \label{thm: generalized}
	Suppose $G = \Theta(l_1, \ldots, l_k)$ where $k \geq 2$, $l_2 \leq \cdots \leq l_k$, and $l_2 \geq \max\{l_1,2\}$.
	
	\textit{(i)}  If there is a $j \in \{2, \ldots, k \}$ such that $l_1$ and $l_j$ have the same parity, then there is an $N \in \N$ such that $P_{DP}(G,m) < P(G,m)$ for all $m \geq N$.
	
	\textit{(ii)}  If $l_1$ and $l_j$ have different parity for each $j \in \{2, \ldots, k \}$, then there is an $N \in \N$ such that $P_{DP}(G,m) = P(G,m)$ for all $m \geq N$. 
\end{thm}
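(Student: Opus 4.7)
The plan is to parameterize each $m$-fold cover $\mathcal{H}$ of $G$ by a $k$-tuple of permutations $(\sigma_1,\dots,\sigma_k)$ of $[m]$: identify $L(x)$ and $L(y)$ with $[m]$ (where $x,y$ are the degree-$k$ vertices of $G$) and absorb all matchings along each path $P_i$ into a single composed permutation $\sigma_i$ from $L(x)$ to $L(y)$ by relabeling the internal vertices of $P_i$; a further relabeling of $L(y)$ lets us assume $\sigma_1=\mathrm{id}$. A standard transfer-matrix computation using the matrix $J-P_\sigma$ gives, for a single length-$l$ path with composed permutation $\sigma$,
\[
N_l^{\sigma}(a,b) \;=\; v_l + (-1)^l\,\mathbf{1}_{b=\sigma(a)},\qquad v_l:=\frac{(m-1)^l-(-1)^l}{m}.
\]
Setting $u_l=v_l+(-1)^l$ and $f(S)=\prod_{i\in S}u_{l_i}\prod_{i\notin S}v_{l_i}$, multiplying over paths yields
\[
P_{DP}(G,\mathcal{H})\;=\;\sum_{a,b\in[m]} f\bigl(S(a,b)\bigr),\qquad S(a,b):=\{i:\sigma_i(a)=b\},
\]
and the canonical cover ($\sigma_i=\mathrm{id}$) recovers $P(G,m)=m\,f([k])+m(m-1)\,f(\emptyset)$.

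For part (i), I take the cover with $\sigma_j$ equal to the transposition $(1\;2)$ for some $j\geq 2$ with $l_j\equiv l_1\pmod{2}$, and $\sigma_i=\mathrm{id}$ for all other $i$. Under this cover only $a\in\{1,2\}$ has a nontrivial induced partition $\pi_a=\{[k]\setminus\{j\},\{j\}\}$, and direct expansion gives
\[
P_{DP}(G,\mathcal{H})-P(G,m)\;=\;2(-1)^{l_j}\!\left[\,\prod_{i\neq j}v_{l_i}-\prod_{i\neq j}u_{l_i}\,\right].
\]
Expanding $u_{l_i}=v_{l_i}+(-1)^{l_i}$, the leading term of the bracket in $m$ is $-(-1)^{l_1}\prod_{i\neq 1,j}v_{l_i}$ (from pulling out the smallest index $i=1$). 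Since $(-1)^{l_j}=(-1)^{l_1}$ by hypothesis, the leading term of $P_{DP}(G,\mathcal{H})-P(G,m)$ is $-2\prod_{i\neq 1,j}v_{l_i}<0$ for all sufficiently large $m$.

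For part (ii), I regroup the sum by $a$ using the partition $\pi_a$ of $[k]$ induced by $i\mapsto\sigma_i(a)$:
\[
P_{DP}(G,\mathcal{H})-P(G,m)\;=\;\sum_{a\in[m]}\Bigl[\,\sum_{B\in\pi_a}f(B)-f([k])-(|\pi_a|-1)\,f(\emptyset)\,\Bigr].
\]
It suffices to prove each summand is nonnegative for large $m$. Setting $r_i=u_{l_i}/v_{l_i}=1+(-1)^{l_i}/v_{l_i}$ and $R_j=\prod_{i\in B_j}r_i=1+\xi_j$, dividing the $a$-summand by $\prod_i v_{l_i}>0$ reduces the required inequality to $-\sum_{s\geq 2}e_s(\xi_1,\dots,\xi_r)\geq 0$, where $r=|\pi_a|$ and $e_s$ is the elementary symmetric polynomial. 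Assuming $1\in B_1$, the parity hypothesis of (ii) gives $l_i>l_1$ and $(-1)^{l_i}=-(-1)^{l_1}$ for every $i\geq 2$, so $|\xi_1|\sim 1/v_{l_1}$ strictly dominates $|\xi_j|=O(1/v_{l_2})$ for $j\geq 2$, and $\xi_1$ has sign opposite to each $\xi_j$. Therefore $e_2$ is negative to leading order (driven by the $\xi_1\xi_j$ cross-terms), dominating all $e_s$ for $s\geq 3$, and the inequality holds for $m$ sufficiently large.

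The main obstacle is making the asymptotic analysis in part (ii) uniform over the (combinatorially many) partitions $\pi_a$ of $[k]$, and handling the boundary case $l_1=1$ where $v_1=1$ forces $\xi_1=-1$ exactly rather than a small perturbation, so the leading-order expansion must be replaced by a direct monotonicity argument in that regime.
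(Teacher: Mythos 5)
Your argument is correct, and it takes a genuinely different route from the paper. The paper proves (i) by applying its Lemma~\ref{lem: oneedge} to the edge $e = uv_{j,1}$ and grinding through the explicit chromatic polynomials of $G$ and $G - \{e\}$; it proves (ii) by a Whitney-type inclusion--exclusion over edge subsets, bounding $\bigl|\bigcap_j S_{i_j}\bigr| - \bigl|\bigcap_j B_{i_j}\bigr|$ case by case according to whether the chosen edge set contains a short cycle (its Lemmas~\ref{lem: basic}--\ref{lem: large}), which yields only a one-sided estimate of the form $m^{n-l_1-l_\mu} - 2^{l+2}m^{n-l_1-l_\mu-1}$. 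You instead exploit the fact that a generalized Theta graph is a union of internally disjoint paths to get an exact transfer-matrix formula $P_{DP}(G,\mathcal{H}) = \sum_{a,b} f(S(a,b))$ valid for every full cover, which collapses the whole problem to the finite family of polynomial inequalities
\[
\sum_{B\in\pi} f(B) - f([k]) - (|\pi|-1)f(\emptyset) \;\geq\; 0,
\]
one for each partition $\pi$ of $[k]$. This is stronger than what the paper proves: it identifies the extremal covers, it would recover the exact formulas of Theorem~\ref{thm: three theta} as the special case $k=3$, and the ``uniformity over covers'' issue you flag as the main obstacle is actually a non-issue, since the per-vertex summand depends only on $\pi_a$ and there are only $B_k$ partitions, each giving a fixed rational function of $m$ whose eventual sign is determined by its leading term. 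Two small points to tighten: in the elementary-symmetric-function estimate, bound each monomial of $e_s$ ($s\geq 3$) and each $\xi_j\xi_{j'}$ ($j,j'\geq 2$) against the matching summand $|\xi_1|\,|\xi_j|$ of $-e_2$ rather than against a single worst-case order (replacing every $S_j=\sum_{i\in B_j}v_{l_i}^{-1}$ by $\Theta(v_{l_2}^{-1})$ in one place and $\Theta(v_{l_k}^{-1})$ in another can make the comparison look like it fails when $l_k \geq 2l_2-1$, whereas the termwise comparison works uniformly because $S_{j'}\to 0$); and the $l_1=1$ case needs no separate monotonicity argument, since $\xi_1=-1$ still has the sign and the dominance over the remaining $\xi_j=O(1/v_{l_2})$ that the expansion requires. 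In part (i) you should also note that ties $l_1=l_2=\cdots$ among the minimal lengths cannot cause cancellation in the leading term, since tied lengths share a parity; with that, both halves are complete.
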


The proof of statement (ii) above takes a lot of effort through careful counting that utilizes a technique recently introduced in~\cite{MT20}.  The details are given in Section~\ref{hard}. Statement (i) is easier to prove but it does not answer the question of whether $P_{DP}(G,m)$ equals a polynomial for sufficiently large $m$. To answer that question, we study the DP color function of a class of graphs that contains all Generalized Theta graphs. Specifically, graphs with a feedback vertex set of order one. 

In Section~\ref{PartialJoin}, we show the following.

\begin{thm}\label{thm: poly}
Suppose that $G$ is a graph with a feedback vertex set of order one. Then there exists $N \in \mathbb{N}$ and a polynomial $p(m)$ such that $P_{DP}(G, m) = p(m)$ for all $m \geq N$.
\end{thm}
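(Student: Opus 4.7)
The plan is to reduce to full $m$-fold covers of a canonical form parametrized by a small number of permutations, and then express the count of $\mathcal{H}$-colorings as a sum over color choices for $v$ whose summands depend only on certain equivalence patterns among forbidden colors. Let $\{v\}$ be the feedback vertex set, so $F := G - v$ is a forest with tree components $T_1, \ldots, T_s$, and let $u_1, \ldots, u_t$ be the neighbors of $v$ in $G$ with $U_r := V(T_r) \cap N_G(v)$. Since inserting cross-edges can only decrease $P_{DP}(G, \mathcal{H})$, the minimum is attained by full covers. Moreover, because $F$ is a forest I may relabel the vertices inside each $L(u)$, tree by tree, so that every matching on an edge of $F$ becomes the identity, and then a single relabeling of $L(v)$ normalizes the matching on one chosen edge $vu_1$ to the identity as well. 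The cover is thus parametrized by a tuple of permutations $(\pi_1,\ldots,\pi_t)$ of $[m]$, one per edge $vu_i$, with $\pi_1 = \mathrm{id}$.

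Fix $j \in [m]$ and set $c_i(j) := \pi_i^{-1}(j)$. An $\mathcal{H}$-coloring with $v$ assigned $(v,j)$ is precisely a proper $m$-coloring of $F$ in which each $u_i$ avoids $c_i(j)$; since the $T_r$ are vertex-disjoint, this count factors as $N_j = \prod_r Q_r(m,\sigma_r(j))$, where $\sigma_r(j)$ is the equivalence relation on $U_r$ given by $u_i \sim u_{i'} \iff c_i(j) = c_{i'}(j)$, and $Q_r(m, \sigma)$ counts the proper $m$-colorings of $T_r$ for any forbidden-color assignment on $U_r$ with equality pattern exactly $\sigma$. Inclusion-exclusion over which vertices of $U_r$ attain their forbidden color shows that $Q_r(m,\sigma)$ is a polynomial in $m$ depending only on $\sigma$. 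Factorwise comparison gives
\[ P_{DP}(G, \mathcal{H}) \;=\; \sum_{j=1}^{m} \prod_r Q_r(m, \sigma_r(j)) \;\geq\; m \prod_r \min_\sigma Q_r(m,\sigma). \]
To match this lower bound, pick any minimizer $\sigma_r^*$ for each $r$, group the edges $vu_i$ by the parts of the corresponding $\sigma_r^*$, and assign a single permutation to each group subject to: (i) any two permutations coming from different parts of the same $\sigma_r^*$ are \emph{column-disjoint} (they disagree at every input); (ii) the group containing $u_1$ uses $\mathrm{id}$. Cyclic shifts of $[m]$ supply column-disjoint families of up to $m$ permutations, and the per-tree constraints decouple because different trees involve disjoint edges at $v$. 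Hence for $m \geq t$ this construction realizes $\sigma_r(j) = \sigma_r^*$ for every $r$ and every $j$ simultaneously, and both bounds become $m \prod_r Q_r(m,\sigma_r^*)$.

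Finally, $\min_\sigma Q_r(m, \sigma)$ is the pointwise minimum of finitely many polynomials in $m$; any two such polynomials are either identical or agree on only finitely many $m$, so for $m$ beyond a threshold the argmin stabilizes to a fixed pattern $\sigma_r^*$. For $N$ exceeding both this threshold and $t$, the formula above gives $P_{DP}(G,m) = m \prod_r Q_r(m,\sigma_r^*)$ whenever $m \geq N$, which is manifestly a polynomial in $m$. The main obstacle is the matching upper bound, which demands a single cover realizing all the minimum local patterns simultaneously across trees and for every $j \in [m]$; the key enabling fact is that the local pattern on $T_r$ depends only on the permutations on edges from $v$ into $U_r$, so the minimization decouples across trees and within each tree reduces to the column-disjointness condition.
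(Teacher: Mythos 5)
Your argument is correct and is essentially the paper's proof in complementary form: you count the surviving colorings directly for each of the $m$ choices at the feedback vertex and factor over the components of $G-v$, where the paper subtracts the bad colorings attached to each of the $m$ disjoint star copies and minimizes over partitions of the star's vertex set; both versions rest on the same ingredients (a canonical labeling of the forest part, polynomiality of precolored counts, stabilization of the extremal pattern for large $m$, and cyclic shifts to realize that pattern uniformly in $j$). The one step you leave implicit -- that the number of proper $m$-colorings of the forest with some vertices forced to prescribed colors is a polynomial in $m$ for large $m$, which your inclusion-exclusion requires -- is exactly the content of the paper's Lemma~\ref{lem: precolor}.
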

  
Even though we do not have an explicit formula for the polynomial $p(m)$, by the discussion at the end of Section~1.2 we know its three highest degree terms are the same as those of the corresponding chromatic polynomial. Our proof of Theorem~\ref{thm: poly} is a generalization of the proof of Lemma~\ref{lem: complex} in~\cite{KM19} as we consider a decomposition of the graph $G$ into a star $G_1$ and a spanning forest $G_0$ rather than just an induced $K_{1,2}$ and a forest. The problem then reduces to carefully counting the number of $\mathcal{H}_0$-colorings of $G_0$ that are not $\mathcal{H}$-colorings of $G$, where $\mathcal{H}_0$ is the $m$-fold cover of $G_0$ induced by a given $m$-fold cover $\mathcal{H}$ of $G$.

The graphs in Theorem~\ref{thm: poly} are a partial join of a vertex with a forest. Interestingly, it is known that for any graph $G$,  $P_{DP}(K_1 \vee G,m) = P(K_1 \vee G,m)$ for sufficiently large $m$ (see~\cite{MT20}). By Theorem~\ref{thm: generalized}, we know this conclusion (i.e., $p(m) = P(G,m)$) cannot hold for all graphs with a feedback vertex set of order one since Theorem~\ref{thm: poly} applies to Generalized Theta graphs.

\section{Theta Graphs} \label{Theta}

A Theta graph $\Theta(l_1, l_2, l_3)$ is just two edges away from being a tree. The removal of an appropriate induced path of length two in $\Theta(l_1, l_2, l_3)$ leaves a tree, allowing us to use the following Lemma and Proposition from~\cite{KM19} to prove Theorem~\ref{thm: three theta}.

\begin{lem}[\cite{KM19}] \label{lem: complex}
Suppose $G$ is a graph and $\mathcal{H} = (L, H)$ is an $m$-fold cover of $G$ with $m \geq 3$. Suppose $\alpha_1, \alpha_2, \alpha_3$ is a path of length two in $G$ and $\alpha_1\alpha_3 \not\in E(G)$. Let $e_1 = \alpha_1\alpha_2$ and $e_2 = \alpha_2\alpha_3$. Then, let $G_0 = G - \{e_1, e_2\}$, $G_1 = G - \{e_1\}$, $G_2 = G - \{e_2\}$, and $G^*$ be the graph obtained from $G$ by adding an edge between $\alpha_1$ and $\alpha_3$. Let $H' = H - (E_H (L(\alpha_1), L(\alpha_2)) \cup E_H(L(\alpha_2), L(\alpha_3)))$ so that $\mathcal{H}' = (L, H')$ is an $m$-fold cover of $G_0$.   Suppose $\mathcal{H}'$ has a canonical labeling.

Let
\begin{align*}
    A_1 &= P(G_0, m) - P(G, m),\\
    A_2 &= P(G_0, m) - P(G_2, m) + \frac{1}{m-1} P(G, m),\\
    A_3 &= P(G_0, m) - P(G_1, m) + \frac{1}{m-1} P(G, m),\\
    A_4 &= \frac{1}{m-1} \left(P(G_1, m) + P(G_2, m) + P(G^*, m) - P(G, m)\right), \text{ and}\\
    A_5 &= \frac{1}{m-1} \left( P(G_1, m) + P(G_2, m) - \frac{1}{m-2} P(G^*, m) \right).
\end{align*}
Then,
\begin{equation*}
P_{DP}(G, \mathcal{H}) \geq P(G_0, m) - \max\{A_1, A_2, A_3, A_4, A_5\}.
\end{equation*}
Moreover, there exists an $m$-fold cover of $G$, $\mathcal{H}^*$, such that
\begin{equation*}
    P_{DP}(G, \mathcal{H}^*) = P(G_0, m) - \max\{A_1, A_2, A_3, A_4, A_5\}.
\end{equation*}
\end{lem}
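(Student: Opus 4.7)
The plan is to reformulate counting $\mathcal{H}$-colorings of $G$ in terms of proper $m$-colorings of $G_0$, then use inclusion--exclusion to isolate the ``bad'' colorings that fail the restored matching constraints on $e_1$ and $e_2$. Since $\mathcal{H}'$ carries a canonical labeling, each $\mathcal{H}'$-coloring of $G_0$ is identified with a proper $m$-coloring $f: V(G_0) \to [m]$ via the correspondence $v \mapsto (v, f(v))$. The two matchings removed in passing from $\mathcal{H}$ to $\mathcal{H}'$ can be encoded by permutations $\sigma_1, \sigma_2 \in S_m$ with $(\alpha_1, i)(\alpha_2, \sigma_1(i)) \in E(H)$ and $(\alpha_2, j)(\alpha_3, \sigma_2(j)) \in E(H)$. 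Then $f$ extends to an $\mathcal{H}$-coloring of $G$ iff $\sigma_1(f(\alpha_1)) \neq f(\alpha_2)$ and $\sigma_2(f(\alpha_2)) \neq f(\alpha_3)$, giving
\begin{equation*}
P_{DP}(G, \mathcal{H}) \;=\; P(G_0, m) - |B|,
\end{equation*}
where $B = X \cup Y$, with $X$ the set of proper $m$-colorings of $G_0$ violating the $e_1$-constraint and $Y$ those violating the $e_2$-constraint. The task reduces to proving $|B| \leq \max\{A_1, \ldots, A_5\}$ uniformly in $\sigma_1, \sigma_2$, together with constructing a cover $\mathcal{H}^*$ for which equality holds.

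For the upper bound, I would let $n(i,j,k)$ denote the number of proper colorings of $G_0$ with $f(\alpha_1)=i$, $f(\alpha_2)=j$, $f(\alpha_3)=k$, so that $|X| = \sum_{i,k} n(i, \sigma_1(i), k)$, $|Y| = \sum_{i,j} n(i, j, \sigma_2(j))$, and $|X \cap Y| = \sum_i n(i, \sigma_1(i), \sigma_2(\sigma_1(i)))$. The key observation is that $P(G_0, m), P(G_1, m), P(G_2, m), P(G, m), P(G^*, m)$ are precisely the sums of $n(i,j,k)$ restricted by zero, one, or more of the inequalities in $\{i \neq j,\; j \neq k,\; i \neq k\}$. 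I would then perform a case analysis on the fixed-point structure of $\sigma_1$, $\sigma_2$, and of the composition $\sigma_2 \circ \sigma_1$ (which governs $|X \cap Y|$ through the event $f(\alpha_1)=f(\alpha_3)$). In each case the inclusion--exclusion expression for $|B|$ reorganizes into a linear combination of these five chromatic polynomials, producing the corresponding $A_i$ as an upper bound; taking the maximum across the cases yields a single uniform bound. For the tightness claim, I would construct $\mathcal{H}^*$ by picking $\sigma_1^*, \sigma_2^*$ that realize whichever $A_i$ attains the maximum (for instance, both identity realizes $A_1$, while making one matching identity and the other a suitably chosen derangement realizes $A_2$ or $A_3$, with the analogous constructions handling $A_4$ and $A_5$), and verifying the equality by direct substitution into the sums above.

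The main obstacle is the bookkeeping in the case analysis: the raw sums for $|X|$, $|Y|$, and especially $|X \cap Y|$ depend intricately on the permutations, and converting them into closed-form combinations of the five chromatic polynomials requires averaging-type identities that explain the $1/(m-1)$ and $1/(m-2)$ factors appearing in $A_2, A_3, A_4, A_5$ when a permutation (or the composition) acts without fixed points. Making sure the five candidate bounds are mutually exhaustive of all configurations of $(\sigma_1, \sigma_2)$, and controlling the interaction term $|X \cap Y|$ whose behavior is mediated by $\sigma_2 \circ \sigma_1$ rather than by $\sigma_1$ and $\sigma_2$ separately, is the most delicate step of the argument.
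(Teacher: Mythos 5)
This lemma is not proved in the present paper --- it is quoted from [KM19] --- but your plan follows essentially the same route as the proof there: use the canonical labeling of $\mathcal{H}'$ to identify $\mathcal{H}'$-colorings of $G_0$ with proper $m$-colorings, encode the two restored matchings as permutations $\sigma_1,\sigma_2$, apply inclusion--exclusion to the two bad events, and do a case analysis on fixed points of $\sigma_1$, $\sigma_2$, and $\sigma_2\circ\sigma_1$, with the extremal covers built from identity/derangement choices exactly as you describe. The one ingredient you flag as an obstacle but do not supply is in fact the whole engine of the argument: since the set of proper colorings of $G_0$ is invariant under permuting the color set $[m]$, the count $n(i,j,k)$ depends only on the equality pattern of the triple $(i,j,k)$, so each pattern class distributes its total uniformly over the admissible triples; this is precisely the ``averaging identity'' that converts sums like $\sum_i n(i,\sigma_1(i),\sigma_2(\sigma_1(i)))$ into multiples of $P(G,m)$, $P(G^*,m)$, etc., and is the source of the $\tfrac{1}{m-1}$ and $\tfrac{1}{m-1}\cdot\tfrac{1}{m-2}$ factors. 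Once you state and use that symmetry, the case analysis closes as you outline.
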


\begin{pro} [\cite{KM19}] \label{pro: tree}
Suppose $T$ is a tree and $\mathcal{H} = (L,H)$ is a full $m$-fold cover of $T$.  Then, $\mathcal{H}$ has a canonical labeling.
\end{pro}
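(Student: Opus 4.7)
The plan is to prove this by induction on $|V(T)|$, using the absence of cycles in $T$ to extend a canonical labeling one leaf at a time with no consistency conditions to verify.

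The base case $|V(T)| = 1$ is immediate: pick any bijection between $L(r)$ and $[m]$ and call the resulting vertices $(r,1), \ldots, (r,m)$; there are no edges of $T$ to check. For the inductive step, let $v$ be a leaf of $T$ with unique neighbor $u$, and set $T' = T - v$. Removing the vertices in $L(v)$ from $H$ (and all incident cross-edges) yields a graph $H'$ so that $\mathcal{H}' = (L|_{V(T')}, H')$ is a full $m$-fold cover of $T'$. By the induction hypothesis, $\mathcal{H}'$ admits a canonical labeling, so the vertices of $H'$ can be renamed so that $L(w) = \{(w,j) : j \in [m]\}$ for each $w \in V(T')$ and $(x,j)(y,j) \in E(H')$ for each edge $xy \in E(T')$ and each $j \in [m]$.

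To extend this labeling to $L(v)$, I will use that $\mathcal{H}$ is full: the matching $E_H(L(u), L(v))$ is perfect, so for each $j \in [m]$ there is a unique vertex of $L(v)$ adjacent to $(u,j)$, and I simply name that vertex $(v,j)$. This produces a bijection between $L(v)$ and $\{(v,j) : j \in [m]\}$, and by construction $(u,j)(v,j) \in E(H)$ for every $j \in [m]$. Since $v$'s only neighbor in $T$ is $u$, the sole edge of $T$ incident to $v$ that needs to be checked is $uv$, and the canonical condition for this edge holds. Combined with the canonical labeling on $T'$, this gives a canonical labeling for $\mathcal{H}$.

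I do not foresee a serious obstacle: the proof works precisely because $T$ has no cycles, so each new leaf has exactly one previously-labeled neighbor, and the choice of perfect matching on that one edge uniquely determines how to extend the labeling without any conflict. For a graph containing a cycle, by contrast, one would need the product of matchings around the cycle to be the identity permutation for a canonical labeling to exist — it is exactly this obstruction that makes canonical labelings fail in general and that drives the more delicate arguments in Lemma~\ref{lem: complex}.
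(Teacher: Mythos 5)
Your proof is correct, and it is essentially the standard argument for this fact (the paper itself cites Proposition~\ref{pro: tree} from~\cite{KM19} without reproducing a proof, but the proof there proceeds by the same leaf-peeling induction, using the perfect matching on the pendant edge to extend the labeling). Your closing remark correctly identifies why acyclicity is essential: with a cycle present, the composition of the matchings around the cycle need not be the identity, which is precisely the obstruction exploited elsewhere in the paper.
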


We are now ready to prove Theorem~\ref{thm: three theta}. 

\begin{proof} Since $G$ contains a cycle, $P_{DP}(G,2) = P_{DP}(G,1) = 0$.  So, the result holds when $m=1,2$. Therefore, throughout this proof we suppose that $m\geq 3$. Let $u$ and $v$ be the common endpoints of the three paths of length $l_1$, $l_2$, and $l_3$ in $G$. Let $\alpha_1$ be the vertex adjacent to $u$ in the path of length $l_1$, $\alpha_2 = u$, and $\alpha_3$ be the vertex adjacent to $u$ in the path of length $l_2$. Note that $\alpha_1, \alpha_2, \alpha_3$ is a path of length 2 in $G$ and $\alpha_1\alpha_3\notin E(G)$. Suppose that $\mathcal{H} = (L,H)$ is an arbitrary $m$-fold cover of $G$. To prove the desired result, we first show that $P_{DP}(G,\mathcal{H})$ (and hence $P_{DP}(G,m)$) is at least our proposed formula for $P_{DP}(G,m)$ in each case. We may assume $\mathcal{H}$ is a full $m$-fold cover since adding edges to $H$ can only make the number of $\mathcal{H}$-colorings of $G$ smaller.

Now, we define $e_1$, $e_2$, $G_0$, $G_1$, $G_2$, $G^*$, and $\mathcal{H}'$ as they are defined in Lemma~\ref{lem: complex}. Since $G_0$ is a tree, it follows from Proposition~\ref{pro: tree} that there is a canonical labeling of $\mathcal{H}'$.

Therefore, the hypotheses of Lemma~\ref{lem: complex} are met. Using well known facts about the chromatic polynomial (see e.g.,~\cite{W01} for an overview), we make the following observation.

\begin{obs}\label{Obs: C-Poly}
The following statements hold for each $m \in \mathbb{N}$.
\begin{gather*}
P(G, m) = \frac{1}{m}\left[(m-1)^{l_1 + l_2 + l_3} + (-1)^{l_1 + l_2 + l_3}(m-1)(m-2) +  \right.\notag\\
\left. (-1)^{l_1 + l_2}(m-1)^{l_3 + 1} + (-1)^{l_1 + l_3}(m-1)^{l_2 + 1} + (-1)^{l_2 + l_3}(m-1)^{l_1 + 1}\right],\\[1.2ex]
P(G_0, m) = m(m-1)^{l_1 + l_2 + l_3 - 2},\\[1.8ex]
P(G_1, m) = (m-1)^{l_1 + l_2 + l_3 - 1} + (-1)^{l_2 + l_3}(m-1)^{l_1},\\[1.8ex]
P(G_2, m) = (m-1)^{l_1 + l_2 + l_3 - 1} + (-1)^{l_1 + l_3}(m-1)^{l_2}, and \\[1.8ex]
    P(G^*, m) = \frac{m-2}{m} \left[ (m-1)^{l_1 + l_2 + l_3 - 1} + (-1)^{l_2 + l_3}(m-1)^{l_1} + (-1)^{l_1 + l_3}(m-1)^{l_2} \right. \notag\\ 
    \left. + (-1)^{l_1 + l_2 + 1}(m-1)^{l_3 + 1} + 2 (-1)^{l_1 + l_2 + l_3} (m-1) \right].
\end{gather*}
\end{obs}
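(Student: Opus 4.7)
The observation collects five chromatic-polynomial formulas, each of which follows from standard techniques; the plan is to handle them in sequence, with only the last requiring real work.

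The formulas for $P(G,m)$ and $P(G_0,m)$ are effectively given. The first is the classical Theta graph chromatic polynomial from~\cite{BH01} recalled in Section~1.3, specialized to $k=3$ and expanded; the second is immediate since $G_0$ is a tree on $l_1+l_2+l_3-1$ vertices, hence $P(G_0,m)=m(m-1)^{l_1+l_2+l_3-2}$. For $P(G_1,m)$ (and symmetrically $P(G_2,m)$), removing $e_1=u\alpha_1$ leaves the cycle $C_{l_2+l_3}$ through $u$ and $v$ together with a pendant path of length $l_1-1$ attached at $v$; since appending a pendant path of length $k$ at a single vertex multiplies the chromatic polynomial by $(m-1)^k$, we get $P(G_1,m)=P(C_{l_2+l_3},m)(m-1)^{l_1-1}$, and substituting the cycle polynomial $P(C_n,m)=(m-1)^n+(-1)^n(m-1)$ yields the stated formula.

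For $P(G^*,m)$, I would apply deletion-contraction to the added edge $\alpha_1\alpha_3$, giving $P(G^*,m)=P(G,m)-P(G^*/\alpha_1\alpha_3,m)$. The key structural observation is that contracting $\alpha_1\alpha_3$ merges them into a single degree-three vertex $\alpha$ whose neighbors are $u$ (the two parallel edges $\alpha_1 u$ and $\alpha_3 u$ collapse to one in the simple graph), the second vertex of the old $l_1$-path, and the second vertex of the old $l_2$-path. Consequently $G^*/\alpha_1\alpha_3$ is itself a Theta graph, namely $\Theta(l_1-1,l_2-1,l_3+1)$, with endpoints $\alpha$ and $v$ (the third path is $\alpha u \cdots v$, of length $l_3+1$). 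Substituting the Theta graph formula from Section~1.3 for both $P(G,m)$ and $P(\Theta(l_1-1,l_2-1,l_3+1),m)$ into the deletion-contraction identity and simplifying then produces the claimed expression.

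The main obstacle is the final algebraic simplification. The index shifts $l_1\mapsto l_1-1$, $l_2\mapsto l_2-1$, $l_3\mapsto l_3+1$ preserve each two-index parity $(-1)^{l_i+l_j}$ but flip the three-index parity $(-1)^{l_1+l_2+l_3}$; this is precisely what doubles the $(m-1)(m-2)$ term (whose two contributions differ in sign and therefore reinforce under subtraction) and collapses each matched pair of $(m-1)^a$ and $(m-1)^{a+1}$ terms into a single $(m-2)(m-1)^a$ contribution. Factoring $(m-2)/m$ out of the resulting sum recovers the stated closed form; a brief sanity check in the boundary case $l_1=l_2=2$, where the contracted graph formally becomes $\Theta(1,1,l_3+1)$, confirms that the Theta formula still agrees with $P(C_{l_3+2},m)$ because the parallel-edge factor in the product expansion vanishes.
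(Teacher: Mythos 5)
Your derivation is correct: all five formulas check out, including the one nontrivial case, where deletion--contraction on the added edge $\alpha_1\alpha_3$ together with the identification $G^*/\alpha_1\alpha_3 = \Theta(l_1-1,\,l_2-1,\,l_3+1)$ (and the correct handling of the degenerate $l_1=l_2=2$ multigraph case) yields exactly the stated expression for $P(G^*,m)$. The paper offers no proof of the Observation, merely citing well-known facts about the chromatic polynomial, so your argument simply supplies the standard details (tree, cycle, and pendant-path formulas, the Theta-graph formula, and deletion--contraction) that the authors leave implicit.
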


By using the  results in Observation~\ref{Obs: C-Poly}, we obtain the following expressions (using the same notation as in Lemma~\ref{lem: complex}):
\begin{align}
    A_2 - A_3 &= (-1)^{l_2+l_3}(m-1)^{l_1} + (-1)^{l_1+l_3+1}(m-1)^{l_2}, \label{eq1}\\
    A_5 - A_4 &= (-1)^{l_1+l_2}(m-1)^{l_3} + (-1)^{l_1+l_2+l_3+1}, \label{eq2}\\
    A_2 - A_1 &= (-1)^{l_2+l_3}(m-1)^{l_1}+(-1)^{l_1+l_2}(m-1)^{l_3}+(-1)^{l_1+l_2+l_3}(m-2),\label{eq3} \\
    A_3 - A_1 &= (-1)^{l_1+l_3}(m-1)^{l_2}+(-1)^{l_1+l_2}(m-1)^{l_3}+(-1)^{l_1+l_2+l_3}(m-2),\label{eq4} \\
    A_4 - A_1 &=(-1)^{l_2+l_3}(m-1)^{l_1} + (-1)^{l_1+l_3}(m-1)^{l_2} + (-1)^{l_1 + l_2+l_3}(m-2), \label{eq5}\\
    A_5 - A_2 &=(-1)^{l_1+l_3}(m-1)^{l_2} + (-1)^{l_1+l_2+l_3+1}, \text{ and}\label{eq6}\\
    A_5 - A_3 &= (-1)^{l_2+l_3}(m-1)^{l_1} + (-1)^{l_1+l_2+l_3+1}.\label{eq7}
\end{align}

Equation~(\ref{eq1}) gives $A_2 \geq A_3$ whenever $l_1 + l_3$ is odd and $A_2 \leq A_3$ otherwise. Equation~(\ref{eq2}) gives $A_5 \geq A_4$ whenever $l_1+l_2$ is even and $A_5 \leq A_4$ otherwise.  Equation~(\ref{eq3}) gives $A_2 \geq A_1$ whenever $l_1+l_2$ is even and $A_2 \leq A_1$ otherwise. Equation~(\ref{eq4}) gives $A_3 \geq A_1$ whenever $l_1+l_2$ is even and $A_3 \leq A_1$ otherwise.  Equation~(\ref{eq5}) gives $A_4 \geq A_1$ whenever $l_1+l_3$ is even and $A_4 \leq A_1$ otherwise. Equation~(\ref{eq6}) gives $A_5 \geq A_2$ whenever $l_1+l_3$ is even and $A_5 \leq A_2$ otherwise. Equation~(\ref{eq7}) gives $A_5 \geq A_3$ whenever $l_2+l_3$ is even and $A_5 \leq A_3$ otherwise.

It is easy to see why the above comparisons hold for Equations (\ref{eq1}), (\ref{eq2}), (\ref{eq6}), and (\ref{eq7}). For the remaining comparisons, some routine computations give us the required conclusions.  To see why the comparison regarding Equation~(\ref{eq3}) holds, first suppose that $l_1+l_2$ is even and it is not the case that $l_1$, $l_2$, and $l_3$ have the same parity. Then:
\begin{align*}
	A_2 - A_1
	&= (m-1)^{l_1}((-1)^{l_2 + l_3} + (m-1)^{l_3 - l_1}) + (-1)^{l_3}(m-2)\\
	&\geq (m-1)^2(-1 + (m-1)) - (m-2)\\
	&= m(m-2)^2\\
	&\geq 0.
\end{align*}

If $l_1$, $l_2$, and $l_3$ have the same parity, then:
\begin{align*}
	A_2 - A_1
	&= (m-1)^{l_1} + (m-1)^{l_3} + (-1)^{l_1 + l_2 + l_3}(m-2)\\
	&\geq 2(m-1)^2 - (m-2)\\
	&= 2m^2 - 5m + 4\\
	&\geq 0.
\end{align*}

Finally, if $l_1+l_2$ is odd, then:
\begin{align*}
	A_2 - A_1
	&= (m-1)^{l_1}((-1)^{l_2+l_3} - (m-1)^{l_3 + l_1}) + (-1)^{l_3 + 1}(m-2)\\
	&\leq (m-1)^{l_1}(1 - (m-1)) + (m-2)\\
	&= (m-2)(1 - (m-1)^{l_1})\\
	&\leq (m-2)(1-(m-1)^2)\\
	&= -m(m-2)^2\\
	&\leq 0.
\end{align*}

Now, to establish the comparison regarding Equation~(\ref{eq4}), first suppose that $l_1 + l_2$ is even and it is not the case that $l_1$, $l_2$, and $l_3$ have the same parity. Then:
\begin{align*}
	A_3 - A_1
	&= (m-1)^{l_2}((-1)^{l_1 + l_3} + (m-1)^{l_3 - l_2}) + (-1)^{l_3}(m-2)\\
	&\geq (m-1)^2(-1 + (m-1)) - (m-2)\\
	&= m(m-2)^2\\
	&\geq 0.
\end{align*}

If $l_1$, $l_2$, and $l_3$ have the same parity, then:
\begin{align*}
	A_3 - A_1
	&= (m-1)^{l_2} + (m-1)^{l_3} + (-1)^{l_1 + l_2 + l_3}(m-2)\\
	&\geq 2(m-1)^2 - (m-2)\\
	&= 2m^2 - 5m + 4\\
	&\geq 0.
\end{align*}

If $l_1 + l_2$ is odd and $l_1 + l_3$ is even, then:
\begin{align*}
	A_3 - A_1
	&= (m-1)^{l_2}(1 - (m-1)^{l_3 - l_2}) + (-1)^{1 + l_3}(m-2)\\
	&\leq (m-1)^{l_2}(1 - (m-1)) + (m-2)\\
	&= (m-2)(1 - (m-1)^{l_2})\\
	&\leq (m-2)(1 - (m-1)^2)\\
	&= -m(m-2)^2\\
	&\leq 0.
\end{align*}

If $l_1 + l_2$ is odd and $l_1 + l_3$ is odd, then:
\begin{align*}
	A_3 - A_1
	&= (m-1)^{l_2}(-1 - (m-1)^{l_3 - l_2}) + (-1)^{1+l_3}(m-2)\\
	&\leq (m-1)^{l_2}(-1 - (m-1)^0) + (m-2)\\
	&= -2(m-1)^{l_2} + (m-2)\\
	&\leq -2(m-1)^2 + (m-2)\\
	&= -2m^2 + 5m - 4\\
	&\leq 0.
\end{align*}

Finally, we establish the comparison regarding Equation~(\ref{eq5}). If $l_1 + l_3$ is even and it is not the case that $l_1$, $l_2$, and $l_3$ have the same parity, then:
\begin{align*}
	A_4 - A_1
	&= (-1)^{l_2 + l_3}(m-1)^{l_1} + (m-1)^{l_2} + (-1)^{l_2}(m-2)\\
	&\geq -(m-1)^{l_1} + (m-1)^{l_1 + 1} - (m-2)\\
	&= (m-2)((m-1)^{l_1}-1)\\
	&\geq (m-2)((m-1)^2 - 1)\\
	&= m(m-2)^{2}\\
	&\geq 0.
\end{align*}

If $l_1$, $l_2$, and $l_3$ have the same parity, then:
\begin{align*}
	A_4 - A_1
	&= (m-1)^{l_1} + (m-1)^{l_2} + (-1)^{l_1 + l_2 + l_3}(m-2)\\
	&\geq (m-1)^2 + (m-1)^3 - (m-2)\\
	&= m(m-1)^2 - (m-2)\\
	&\geq 0.
\end{align*}

If $l_1 + l_3$ is odd and $l_2 + l_3$ is even, then:
\begin{align*}
	A_4 - A_1
	&= (m-1)^{l_1} - (m-1)^{l_2} + (-1)^{1 + l_2}(m-2)\\
	&\leq (m-1)^{l_1} - (m-1)^{l_1 + 1} + (m-2)\\
	&= (m-2)(1 - (m-1)^{l_1})\\
	&\leq (m-2)(1 - (m-1)^2)\\
	&= -m(m-2)^2\\
	&\leq 0.
\end{align*}

If $l_1 + l_3$ is odd and $l_2 + l_3$ is odd, then:
\begin{align*}
	A_4 - A_1
	&= -(m-1)^{l_1} - (m-1)^{l_2} + (-1)^{1 + l_2}(m-2)\\
	&\leq -2(m-1)^2 + (m-2)\\
	&= -2m^2 + 5m - 4\\
	&\leq 0.
\end{align*} 

According to the conditions described in statements (i), (ii), (iii), and (iv) of Theorem~\ref{thm: three theta}, $\max\{A_i : i\in[5]\} = A_1$,$A_2$,$A_4$, and $A_5$, respectively. To see this, begin by considering the conditions described in statement~(i), where $l_1 + l_2$ is odd, $l_2 + l_3$ is even, and $l_1 + l_3$ is odd. From Equations (1) and (3) we have $A_1 \geq A_2 \geq A_3$, and from Equations (2) and (5) we have $A_1 \geq A_4 \geq A_5$; hence, $\max\{A_i : i\in[5]\} = A_1$. Similarly, consider the conditions described in statement~(ii), where $l_1 + l_2$ is even, $l_2 + l_3$ is odd, and $l_1 + l_3$ is odd. From Equations (1) and (4) we have $A_2 \geq A_3 \geq A_1$, and from Equations (2) and (6) we have $A_2 \geq A_5 \geq A_4$; hence, $\max\{A_i : i\in[5]\} = A_2$. Next, consider the conditions described in statement~(iii), where $l_1 + l_2$ is odd, $l_2 + l_3$ is odd, and $l_1 + l_3$ is even. From Equations (2) and (6) we have $A_4 \geq A_5 \geq A_2$, and from Equations (4) and (5) we have $A_4 \geq A_1 \geq A_3$; hence, $\max\{A_i : i \in [5]\} = A_4$. Finally, consider the conditions described in statement~(iv), where $l_1$, $l_2$, and $l_3$ have the same parity. From Equations (2) and (5) we have $A_5 \geq A_4 \geq A_1$, and from Equations (1) and (7) we have $A_5 \geq A_3 \geq A_2$; hence, $\max\{A_i : i \in [5]\} = A_5$.  Lemma~\ref{lem: complex} then implies that $P_{DP}(G,\mathcal{H})$ is at least our proposed formula for $P_{DP}(G,m)$ in each case. 

Finally, Lemma~\ref{lem: complex} also implies there exists an $m$-fold cover for which we achieve our proposed formula for $P_{DP}(G,m)$ in each case.  The result follows. 
\end{proof}

\section{Generalized Theta Graphs}\label{GenTheta}

Throughout this section, if $G = \Theta(l_1, \ldots, l_k)$ we assume $k \geq 2$, $l_2 \leq \cdots \leq l_k$, and $l_2 \geq \max\{l_1,2\}$.  Also, we will assume that the vertices that are the common endpoints in $V(G)$ are $u$ and $w$. We also let the vertices of the $i$th path be \footnote{Notice that if $l_1=1$, the first path has no internal vertices.}: $u, v_{i,1}, \ldots , v_{i,l_i-1}, w.$  Recall that it is well-known (see~\cite{BH01}) that if $G = \Theta(l_1, \ldots, l_k)$, then for each $m \in \N$, 
\[P(G,m) = \frac{\prod_{i=1}^{k}((m-1)^{l_i+1}+(-1)^{l_i+1}(m-1))}{(m(m-1))^{k-1}} + \frac{\prod_{i=1}^k((m-1)^{l_i}+(-1)^{l_i}(m-1))}{m^{k-1}}.\]

It should be noted that the above formula even holds when $k=1$ in which case $G$ is a path.  We will also make use of the following result.

\begin{lem} [\cite{KM19}] \label{lem: oneedge}
Suppose that $G$ is a graph with $e=uv \in E(G)$. If $m \geq 2$ and
$$ P(G- \{e\} , m) < \frac{m}{m-1} P(G,m),$$
then $P_{DP}(G,m) < P(G,m)$.
\end{lem}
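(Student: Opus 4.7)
The plan is to construct a one-parameter family of $m$-fold covers $\mathcal{H}_0, \mathcal{H}_1, \ldots, \mathcal{H}_{m-1}$ of $G$ that agree everywhere except on the matching above the edge $e=uv$, and then apply an averaging argument across this family to extract a single cover whose number of $\mathcal{H}$-colorings strictly undercuts $P(G,m)$. The only nontrivial conceptual step, and the main obstacle, is choosing the right family; my proposal is to cyclically \emph{shift} the matching above $e$.

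Concretely, I would first fix a canonical $m$-fold cover $\mathcal{H}_0=(L,H_0)$ of $G$: set $L(x)=\{(x,j):j\in[m]\}$ for each $x\in V(G)$, and join $(x,j)(y,j)$ whenever $xy\in E(G)$. Then $\mathcal{H}_0$-colorings of $G$ are in bijection with proper $m$-colorings of $G$, so $P_{DP}(G,\mathcal{H}_0)=P(G,m)$. For each $i \in \{1,\ldots,m-1\}$, let $\mathcal{H}_i=(L,H_i)$ be obtained from $\mathcal{H}_0$ by replacing the matching $E_{H_0}(L(u),L(v))$ with the shifted matching whose edges are the pairs $(u,j)(v,\,j+i)$, with addition taken modulo $m$ on the second coordinate. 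Each $\mathcal{H}_i$ is a valid $m$-fold cover of $G$, since a cyclic shift is a bijection, and directly from the definition one sees that $\mathcal{H}_i$-colorings of $G$ correspond precisely to proper $m$-colorings $f$ of $G-\{e\}$ satisfying $f(v)-f(u)\not\equiv i\pmod m$.

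Summing over $i$, each proper coloring of $G-\{e\}$ is counted exactly $m-1$ times — once for every $i$ other than its particular value of $f(v)-f(u)$ mod $m$ — so
\[\sum_{i=0}^{m-1} P_{DP}(G,\mathcal{H}_i) \;=\; (m-1)\, P(G-\{e\},m).\]
If $P_{DP}(G,\mathcal{H}_i) \geq P(G,m)$ held for every $i$, this identity would force $(m-1)\,P(G-\{e\},m) \geq m\,P(G,m)$, contradicting the hypothesis $P(G-\{e\},m) < \tfrac{m}{m-1} P(G,m)$. Hence some $\mathcal{H}_i$ realizes strictly fewer than $P(G,m)$ independent transversals, which yields $P_{DP}(G,m) < P(G,m)$.
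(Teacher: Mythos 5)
Your proof is correct, and the cyclic-shift family $\mathcal{H}_0,\ldots,\mathcal{H}_{m-1}$ together with the averaging/pigeonhole step is essentially the same "twisted cover" argument used in the cited source \cite{KM19} for this lemma (there phrased as choosing the shift $i$ that maximizes the number of proper colorings $f$ of $G-\{e\}$ with $f(v)-f(u)\equiv i \pmod m$). No gaps.
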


We are now ready to focus on the proof of Theorem~\ref{thm: generalized}. We begin by proving the first half of Theorem~\ref{thm: generalized}. The proof of the second half of Theorem~\ref{thm: generalized} requires significantly more effort than the first half and will be done in Section~\ref{hard}. 

\begin{proof}
Here we prove Statement~(i). Throughout the proof of Statement~(i) suppose that $a = m-1$.  Let $e = uv_{j,1}$.  Notice that $G - \{e\}$ is a copy of $\Theta(l_1 , \ldots, l_{j-1}, l_{j+1}, \ldots, l_k)$ that shares the vertex $w$ with a path of length $l_j-1$.  Consequently, for each $m \in \N$,
\begin{align*}
&P(G- \{e\},m) \\
&= \left(  \frac{\prod_{i \in [k] - \{j\}}((a)^{l_i+1}+(-1)^{l_i+1}(a))}{((a+1)a)^{k-2}} + \frac{\prod_{i \in [k] - \{j\}}((a)^{l_i}+(-1)^{l_i}(a))}{(a+1)^{k-2}}  \right)a^{l_j - 1}.
\end{align*}

Now, it is easy to see that for any $m \geq 2$,

\begin{align*}
&\frac{m}{m-1} P(G,m) - P(G- \{e\},m) \\
&= \frac{\prod_{i=1}^{k}(a^{l_i+1}+(-1)^{l_i+1}a)}{a^k (a+1)^{k-2}} - \frac{a^{l_j - 1}\prod_{i \in [k] - \{j\}}((a)^{l_i+1}+(-1)^{l_i+1}(a))}{((a+1)a)^{k-2}} \\
&+ \frac{\prod_{i=1}^k(a^{l_i}+(-1)^{l_i}a)}{a(a+1)^{k-2}} - \frac{a^{l_j - 1}\prod_{i \in [k] - \{j\}}((a)^{l_i}+(-1)^{l_i}(a))}{(a+1)^{k-2}} \\
&= \frac{\prod_{i \in [k] - \{j\}}((a)^{l_i+1}+(-1)^{l_i+1}(a))}{a^k(a+1)^{k-2}} \left(a^{l_j+1} + (-1)^{l_j+1}a - a^{l_j+1} \right) \\
 &+ \frac{\prod_{i \in [k] - \{j\}}((a)^{l_i}+(-1)^{l_i}(a))}{a(a+1)^{k-2}} \left(a^{l_j} + (-1)^{l_j}a - a^{l_j} \right) \\
&= \frac{(-1)^{l_j}}{(a+1)^{k-2}} \left( \prod_{i \in [k] - \{j\}}((a)^{l_i}+(-1)^{l_i}(a)) - \frac{\prod_{i \in [k] - \{j\}}((a)^{l_i+1}+(-1)^{l_i+1}(a))}{a^{k-1}} \right) \\
&= \frac{(-1)^{l_j}}{(a+1)^{k-2}} \left( \prod_{i \in [k] - \{j\}}((a)^{l_i}+(-1)^{l_i}(a)) - \prod_{i \in [k] - \{j\}}((a)^{l_i}+(-1)^{l_i+1}) \right).
\end{align*}

Notice that when $l_1=1$, we know that $l_j$ is odd and $\prod_{i \in [k] - \{j\}}((a)^{l_i}+(-1)^{l_i}(a)) = 0$.  Consequently, when $l_1=1$, $\frac{m}{m-1} P(G,m) - P(G- \{e\},m) >0$ whenever $m \geq 3$, and the desired result follows by Lemma~\ref{lem: oneedge}.  So, suppose that $l_1 > 1$.  In this case the leading term of $\left( \prod_{i \in [k] - \{j\}}((a)^{l_i}+(-1)^{l_i}(a)) - \prod_{i \in [k] - \{j\}}((a)^{l_i}+(-1)^{l_i+1}) \right)$ is of the form $(-1)^{l_1} C a^{1+ \sum_{i \in [k] - \{1,j\}} l_i}$ where $C$ is some positive constant.~\footnote{We take $\sum_{i \in [k] - \{1,j\}} l_i$ to be 0 in the case that $k=2$.}  Consequently, there is an $N \in \N$ such that $\left( \prod_{i \in [k] - \{j\}}((a)^{l_i}+(-1)^{l_i}(a)) - \prod_{i \in [k] - \{j\}}((a)^{l_i}+(-1)^{l_i+1}) \right)$ is positive (resp. negative) when $l_1$ is even (resp. odd) and $m \geq N$.  Since $l_1$ and $l_j$ have the same parity, this means that $\frac{m}{m-1} P(G,m) - P(G- \{e\},m) >0$ whenever $m \geq N$, and the desired result follows by Lemma~\ref{lem: oneedge}.      
\end{proof}

\subsection{Proof of Theorem~\ref{thm: generalized} Statement~(ii)} \label{hard}

To prove Statement~(ii) of Theorem~\ref{thm: generalized}, we will build upon a method from~\cite{MT20}.  We begin by stating a simple result.

\begin{pro} [\cite{MT20}] \label{pro: obvious}
Suppose that $\mathcal{H} = (L,H)$ is an $m$-fold cover of graph $G$ and $\mathcal{H}$ has a canonical labeling.  Let $B_i = \{(v,i): v \in V(G) \}$ for each $i \in [m]$. Then, $I \subset V(H)$ satisfies: $|I \cap L(v)|=1$ for each $v \in V(G)$ and $H[I]$ is isomorphic to $G$ if and only if $I = B_j$ for some $j \in [m]$.
\end{pro}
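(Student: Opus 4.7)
The plan is to prove each direction of the biconditional by direct use of the canonical-labeling definition.

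For the forward direction, if $I = B_j$ then $|B_j \cap L(v)| = 1$ for every $v \in V(G)$ by construction, and the map $v \mapsto (v,j)$ realizes an isomorphism from $G$ onto $H[B_j]$, since the canonical labeling guarantees $(u,j)(v,j) \in E(H)$ whenever $uv \in E(G)$, while cover property~(3) forbids edges between $L(u)$ and $L(v)$ when $uv \notin E(G)$ and $u \neq v$.

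Before tackling the converse, I would first record a sharpening of the canonical-labeling hypothesis: for every $uv \in E(G)$, the matching $E_H(L(u),L(v))$ consists of \emph{exactly} the $m$ diagonal edges $\{(u,i)(v,i) : i \in [m]\}$. Indeed, these $m$ edges are guaranteed by the labeling, and they already saturate any matching between $L(u)$ and $L(v)$ since $|L(u)| = |L(v)| = m$; hence no ``off-diagonal'' cross-edge $(u,i)(v,j)$ with $i \neq j$ can belong to $H$.

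For the converse, suppose $I$ satisfies the two hypotheses. The condition $|I \cap L(v)| = 1$ lets me define a function $f : V(G) \to [m]$ by taking $(v,f(v))$ to be the unique element of $I \cap L(v)$, so $I = \{(v,f(v)) : v \in V(G)\}$. By cover property~(3) together with the sharpening above, an edge of $H[I]$ joining $(u,f(u))$ and $(v,f(v))$ exists if and only if $uv \in E(G)$ and $f(u) = f(v)$. Consequently $|E(H[I])| \le |E(G)|$, with equality precisely when $f(u) = f(v)$ for every $uv \in E(G)$. Since $H[I] \cong G$ forces $|E(H[I])| = |E(G)|$, equality must hold, and hence $f$ is constant along every edge of $G$. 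Because $G$ is connected (a standing assumption in the paper), $f$ must be globally constant, say $f \equiv j$, and therefore $I = B_j$.

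I do not anticipate a serious obstacle: the argument is essentially a definition chase. The one step that might be easy to skip over is the sharpening that cross-edges between $L(u)$ and $L(v)$ are forced to lie exactly on the diagonal, which is what converts the flexible condition ``$H[I] \cong G$'' into the rigid conclusion that $f$ is edge-constant, and from there into globally constant via connectedness.
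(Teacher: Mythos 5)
Your proof is correct. Note that the paper does not actually prove Proposition~\ref{pro: obvious} --- it is imported from~\cite{MT20} without proof --- so there is nothing to compare against, but your argument is the natural one: the key observations (that property~(4) plus the $m$ diagonal edges force the matching $E_H(L(u),L(v))$ to be exactly the diagonal, that the isomorphism $H[I]\cong G$ can be exploited merely through the edge count $|E(H[I])|=|E(G)|$, and that connectedness of $G$ --- the paper's standing assumption, which is genuinely needed here --- upgrades ``$f$ constant on edges'' to ``$f$ globally constant'') are exactly the right ones.
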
  

Throughout this subsection suppose $G = \Theta(l_1, \ldots, l_k)$ and $l_1$ has different parity than $l_j$ for each $j \in \{2, \ldots, k \}$.  This implies that $l_1 < l_2$, and any odd cycle contained in $G$ must contain the path of length $l_1$ from $u$ to $w$.

Let $l = \sum_{i=1}^k l_i$, and note that $|E(G)|=l$.  Also, let $n = l + 2 - k$, and note that $|V(G)|=n$.  Now, for each $i \in [k]$ let $e_{i}=uv_{i,1}$.~\footnote{Note that in the case that $l_1=1$, $v_{1,1}=w$.}  Then, arbitrarily name the other edges of $G$ so that $E(G) = \{e_i : i \in [l] \}$.  We want to show that $P_{DP}(G,m) = P(G,m)$ for sufficiently large $m$, or equivalently, $P_{DP}(G,m) \geq P(G,m)$ for sufficiently large $m$.  

For each $i \in [l]$ suppose that $B^{(m)}_i$ is the set of $m$-colorings of $G$ that color the endpoints of $e_i$ with the same color.~\footnote{We will just write $B_i$ when $m$ is clear from context.}  It is well-known that
$$P(G, m) =  m^n - \sum_{p=1}^l (-1)^{p-1} \left ( \sum_{1 \leq i_1 < \cdots < i_p \leq l} \left | \bigcap_{j=1}^p B_{i_j} \right| \right)$$
and $| \bigcap_{j=1}^p B_{i_j}| = m^c$ where $c$ is the number of components of the spanning subgrah of $G$ with edge set $\{e_{i_1}, \ldots, e_{i_p} \}$ (see~\cite{W32}). 

In this subsection we are interested in finding a lower bound for $P_{DP}(G,m)$.  So, whenever $\mathcal{H} = (L,H)$ is an $m$-fold cover for $G$, we will assume $\mathcal{H}$ is a full $m$-fold cover.  We will also suppose without loss of generality that $L(x) = \{(x,j): j \in [m] \}$ for each $x \in V(G)$, and $(x,j)(y,j) \in E(H)$ for each $xy \in E(G) - \{e_{2}, \ldots, e_{k}\}$ and $j \in [m]$ (this is permissible by Proposition~\ref{pro: tree} since the spanning subgraph of $G$ with edge set $E(G) - \{e_{2}, \ldots, e_{k}\}$ is a tree).  Also, for each $i \in \{2, \ldots, k \}$, let $x_{i, \mathcal{H}}$~\footnote{We will just write $x_i$ when $\mathcal{H}$ is clear from context.} be the number of edges in $E_H(L(u),L(v_{i,1}))$ that connect endpoints with differing second coordinates.  Let $\mu_\mathcal{H}$ be the smallest $i \in \{2, \ldots, k \}$ with the property that $x_{i, \mathcal{H}} > 0$.  If there is no such element in $\{2, \ldots, k\}$, let $\mu_\mathcal{H} = 0$.  Clearly, if $\mu_{\mathcal{H}}=0$, then $\mathcal{H}$ has a canonical labeling and $P_{DP}(G, \mathcal{H})=P(G,m)$.

Suppose that $\mathcal{H}= (L,H)$ is an $m$-fold cover of $G$.  Let $\mathcal{U} = \{ I \subseteq V(H) : |L(v) \cap I| = 1 \text{ for each } v \in V(G) \}$.  Clearly, $|\mathcal{U}| = m^n$.  Now, for each $i \in [l]$, suppose $e_i=y_i z_i$, and let $S_i$ be the set consisting of each $I \in \mathcal{U}$ with the property that $H[I]$ contains an edge in $E_H(L(y_i), L(z_i))$.  It is easy to show (see~\cite{MT20})
$$P_{DP}(G, \mathcal{H}) = |\mathcal{U}| - \left | \bigcup_{p=1}^l S_p \right | =  m^n - \sum_{p=1}^l (-1)^{p-1} \left ( \sum_{1 \leq i_1 < \cdots < i_p \leq l} \left | \bigcap_{j=1}^p S_{i_j} \right| \right).$$

For the next four lemmas assume that  $\mathcal{H}= (L,H)$ is an $m$-fold cover of $G$.  Let $\mu = \mu_{\mathcal{H}}$, and suppose that $\mu > 0$.  Also, suppose precisely $t$ of the integers $l_1, \ldots, l_k$ equal $l_\mu$, and assume without loss of generality that $l_{\mu} = \cdots = l_{\mu + (t-1)}$.  Using the notation above, our goal is to establish a lower bound on
$$P_{DP}(G, \mathcal{H}) - P(G,m) =  \sum_{p=1}^l (-1)^{p} \left ( \sum_{1 \leq i_1 < \cdots < i_p \leq l} \left( \left | \bigcap_{j=1}^p S_{i_j} \right| - \left| \bigcap_{j=1}^p B_{i_j} \right| \right) \right).$$

\begin{lem} \label{lem: basic}
Suppose $\{e_{i_1}, \ldots, e_{i_p} \} \subseteq E(G)$ and the spanning subgraph $G'$ of $G$ with edge set $\{e_{i_1}, \ldots, e_{i_p} \}$ has $c$ components.  Then, 
$$-m^c \leq \left | \bigcap_{j=1}^p S_{i_j} \right| - \left| \bigcap_{j=1}^p B_{i_j} \right| \leq 0.$$
\end{lem}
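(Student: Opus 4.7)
The plan is to identify each $I \in \mathcal{U}$ with the function $f_I : V(G) \to [m]$ defined by $I \cap L(v) = \{(v, f_I(v))\}$; this bijection realizes $\mathcal{U}$ as $[m]^{V(G)}$. Under it, $\bigcap_{j=1}^{p} B_{i_j}$ corresponds to those $f$ satisfying $f(y_{i_j}) = f(z_{i_j})$ for each $j$, so such an $f$ is constant on every component of $G'$, which recovers the standard count $\bigl|\bigcap_{j=1}^{p} B_{i_j}\bigr| = m^{c}$.

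For $\bigcap_{j=1}^{p} S_{i_j}$, the key observation is that every edge of $G'$ imposes a constraint of the form $f(z) = \pi_e(f(y))$ for some permutation $\pi_e$ of $[m]$. Indeed, for $e \in E(G) \setminus \{e_{2}, \ldots, e_{k}\}$ the matching is canonical by the standing assumption, so $\pi_e$ is the identity. For $e = u v_{i,1}$ with $i \in \{2, \ldots, k\}$, the hypothesis that $\mathcal{H}$ is a full cover makes $E_H(L(u), L(v_{i,1}))$ a perfect matching, which therefore induces a permutation $\sigma_i$ of $[m]$ via $(u,j)(v_{i,1},\sigma_i(j)) \in E(H)$, and the constraint is $f(v_{i,1}) = \sigma_i(f(u))$.

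Next I would fix, for each of the $c$ components $C$ of $G'$, a spanning tree $T_C$ and a root $r_C$. A choice of $f(r_C) \in [m]$ determines $f$ on all of $T_C$ by propagating the bijective constraints along the tree edges; the non-tree edges of $C$ are then either consistent with this extension or not. Thus each component contributes at most $m$ admissible functions, yielding $\bigl|\bigcap_{j=1}^{p} S_{i_j}\bigr| \leq m^{c} = \bigl|\bigcap_{j=1}^{p} B_{i_j}\bigr|$, which is the upper bound. The lower bound $\bigl|\bigcap_{j=1}^{p} S_{i_j}\bigr| - \bigl|\bigcap_{j=1}^{p} B_{i_j}\bigr| \geq -m^{c}$ is immediate from $\bigl|\bigcap_{j=1}^{p} S_{i_j}\bigr| \geq 0$ together with the computed value $m^{c}$ on the right.

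I do not anticipate any substantive obstacle beyond careful translation between selections and functions; the real content is the recognition that in a full cover each cross-edge acts as a bijection between color classes, which allows the usual per-component counting argument for ordinary $m$-colorings to serve simultaneously as a sharp upper bound for $\bigl|\bigcap_{j=1}^{p} S_{i_j}\bigr|$ and to compute $\bigl|\bigcap_{j=1}^{p} B_{i_j}\bigr|$ exactly.
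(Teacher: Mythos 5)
Your proof is correct and follows essentially the same route as the paper: the lower bound is immediate from $\bigl|\bigcap_{j=1}^{p} S_{i_j}\bigr| \geq 0$ and $\bigl|\bigcap_{j=1}^{p} B_{i_j}\bigr| = m^{c}$, and the upper bound comes from a per-component count of at most $m$ admissible selections, obtained by propagating the (bijective) cross-edge constraints along a spanning tree of each component. The paper packages this propagation step as an appeal to its Propositions on canonical labelings of tree covers, but the underlying argument is the same.
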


\begin{proof}
The first inequality follows from the fact that $\left| \bigcap_{j=1}^p S_{i_j} \right| \geq 0$ and $ \left| \bigcap_{j=1}^p B_{i_j} \right| =m^c$.  For the second inequality, we must show that $\left| \bigcap_{j=1}^p S_{i_j} \right| \leq m^c$.

Let $H' = H - \bigcup_{xy \in E(G)-E(G')} E_H(L(x),L(y))$, and suppose the components of $G'$ are $W_1, \ldots, W_c$. Note that we can construct each element $I$ of $\bigcap_{j=1}^p S_{i_j}$ in exactly one way via the following $c$ step procedure.  Furthermore, any application of the following $c$ steps will always result in an element of $\bigcap_{j=1}^p S_{i_j}$.  For each $i \in [c]$ consider the component $W_i$.  Suppose $V(W_i) = \{w_1, \ldots, w_q\}$.  Choose one element from each of $L(w_1), \ldots, L(w_q)$ so that the subgraph of $H'$ induced by the set containing these chosen elements is isomorphic to $W_i$ (this can be done in at most $m$ ways which can be seen by considering a spanning tree of $W_i$ and applying Propositions~\ref{pro: tree} and~\ref{pro: obvious}.).  Then, place these chosen elements in $I$.  Since each step can be completed in at most $m$ ways, we may conclude that $\left| \bigcap_{j=1}^p S_{i_j} \right| \leq m^c$. 
\end{proof}

\begin{lem} \label{lem: small}
Suppose $\{e_{i_1}, \ldots, e_{i_p} \} \subseteq E(G)$ and the spanning subgraph $G'$ of $G$ with edge set $\{e_{i_1}, \ldots, e_{i_p} \}$ contains no cycles or only contains cycles with fewer than $l_1 + l_\mu$ edges.  Then, 
$$\left| \bigcap_{j=1}^p S_{i_j} \right| - \left| \bigcap_{j=1}^p B_{i_j} \right| = 0.$$
\end{lem}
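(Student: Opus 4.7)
The plan is to compute $\left| \bigcap_{j=1}^p S_{i_j} \right|$ directly and show it equals $m^c$, which matches the well-known value of $\left| \bigcap_{j=1}^p B_{i_j} \right|$ that comes from the classical inclusion--exclusion formula for $P(G,m)$.

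First I would isolate which edges of $G'$ can have non-canonical matchings in $\mathcal{H}$. By the setup of this subsection, only $e_2, \ldots, e_k$ may fail to be canonical, and by the minimality of $\mu$ the edges $e_2, \ldots, e_{\mu-1}$ are canonical. So every non-canonical edge of $G'$ lies in $\{e_\mu, \ldots, e_k\}$. The key structural observation is that each such non-canonical edge must be a bridge of $G'$: any cycle of $G$ through $e_i$ with $i \geq \mu$ must traverse all of path $i$ together with some other path $j \neq i$, producing a cycle of length $l_i + l_j \geq l_\mu + l_1 = l_1 + l_\mu$. This exceeds the cycle-length threshold in the hypothesis, so $G'$ contains no cycle through such an $e_i$.

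For the counting step I would identify each $I \in \mathcal{U}$ with the function $f \colon V(G) \to [m]$ defined by $\{(v, f(v))\} = I \cap L(v)$. Then $I \in \bigcap_{j=1}^p S_{i_j}$ translates into the constraints: $f(y_{i_j}) = f(z_{i_j})$ whenever $e_{i_j}$ is canonical, and $f(z_{i_j}) = \pi_{i_j}(f(y_{i_j}))$ for a fixed permutation $\pi_{i_j}$ of $[m]$ (determined by the perfect matching $E_H(L(y_{i_j}), L(z_{i_j}))$) whenever $e_{i_j}$ is non-canonical. Let $G_c'$ denote the spanning subgraph of $G'$ consisting of its canonical edges, let $\nu$ denote the number of non-canonical edges in $G'$, let $c_0$ denote the number of components of $G_c'$, and let $c$ denote the number of components of $G'$. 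The canonical constraints force $f$ to be constant on each component of $G_c'$, yielding $m^{c_0}$ admissible functions at this stage.

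Finally I would restore the non-canonical edges of $G'$ to $G_c'$ one at a time. Since each such edge is a bridge of $G'$, its endpoints must lie in different components of the current intermediate subgraph (otherwise adding it would close a cycle of $G'$ through a non-canonical edge, contradicting the previous step). So each addition merges two components, giving $c_0 = c + \nu$, and the bijective constraint it imposes cuts the count of admissible $f$ by exactly a factor of $m$. After all $\nu$ restorations, the count is $m^{c_0 - \nu} = m^c$, which matches the standard identity $\left| \bigcap_{j=1}^p B_{i_j} \right| = m^c$; the asserted difference therefore vanishes. The main obstacle is the bridge step, showing that no non-canonical edge of $G'$ lies in any cycle of $G'$; that is where the cycle-length hypothesis and the special structure of $\Theta(l_1, \ldots, l_k)$ (cycles using exactly two full paths) are essential. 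Everything else is routine bookkeeping analogous to the proof of Lemma \ref{lem: basic}.
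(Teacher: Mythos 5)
Your proof is correct and follows essentially the same route as the paper's: both hinge on the observation that any cycle of $G$ through an edge $e_\gamma$ with $\gamma \ge \mu$ has length $l_\gamma + l_j \ge l_1 + l_\mu$, so under the hypothesis every non-canonical edge of $G'$ is a bridge and its perfect matching, being a bijection, does not change the count, which stays at $m^c = \left| \bigcap_{j=1}^p B_{i_j} \right|$. The paper organizes this per component of $G'$, renaming cover vertices along the dangling partial paths $T_\gamma$ to reduce to the canonical case and then invoking Proposition~\ref{pro: obvious}, whereas you track the permutation constraints globally and absorb the bridges one at a time; this is a difference of bookkeeping rather than of substance.
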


\begin{proof}
Let $H' = H - \bigcup_{xy \in E(G)-E(G')} E_H(L(x),L(y))$, and suppose $G'$ has $c$ components.  We must show that $\left| \bigcap_{j=1}^p S_{i_j} \right| = m^c$.  Suppose the components of $G'$ are $W_1, \ldots, W_c$.  Note that we can construct each element $I$ of $\bigcap_{j=1}^p S_{i_j}$ in exactly one way via the following $c$ step procedure.  Furthermore, any application of the following $c$ steps will always result in an element of $\bigcap_{j=1}^p S_{i_j}$.  For each $i \in [c]$ consider the component $W_i$.  Choose one element from each of $L(w_1), \ldots, L(w_q)$ so that the subgraph of $H'$ induced by the set containing these chosen elements is isomorphic to $W_i$.  Then, place these chosen elements in $I$.  We claim that this can be done in precisely $m$ ways.

If $W_i$ is a tree then Propositions~\ref{pro: tree} and~\ref{pro: obvious} imply that there are exactly $m$ ways this can be done.  If $W_i$ contains the cycles: $M_1, \ldots, M_b$, then the fact that $W_i$ contains no cycles with at least $l_1 + l_\mu$ edges implies that for any edge $xy \in \bigcup_{i=1}^b E(M_i)$, $E_H(L(x),L(y)) = \{(x,j)(y,j): j \in [m] \}$.  Furthermore, if $W_i$ does not contain any edges in $\{e_j : \mu \leq j \leq k \}$, $E_H(L(x),L(y)) = \{(x,j)(y,j): j \in [m] \}$ for every $xy \in E(W_i)$.  In the case that $W_i$ contains at least one element of $\{e_j : \mu \leq j \leq k \}$, for each $e_\gamma \in \{e_j : \mu \leq j \leq k \}$, let $r_\gamma$ be the largest element in $[l_\gamma - 1]$ so that the path $T_\gamma$ with vertices: $u, v_{\gamma,1}, \ldots, v_{\gamma,r_\gamma}$ is a path in $W_i$ (note that $v_{\gamma,r_\gamma}w \notin E(W_i)$ since $G'$ only contains cycles with fewer than $l_1 + l_\mu$ edges).  Then, rename the vertices in $L(v_{\gamma,1}), \ldots, L(v_{\gamma,r_\gamma})$ so that $E_H(L(x),L(y)) = \{(x,j)(y,j): j \in [m] \}$ for every $xy \in E(T_\gamma)$.  After this renaming is complete, we will have that $E_H(L(x),L(y)) = \{(x,j)(y,j): j \in [m] \}$ for every $xy \in E(W_i)$.  Proposition~\ref{pro: obvious} then implies that there are exactly $m$ ways to choose one element from each of $L(w_1), \ldots, L(w_q)$ so that the subgraph of $H'$ induced by the set containing these chosen elements is isomorphic to $W_i$.  

Since each step can be completed in $m$ ways, we may conclude that $\left| \bigcap_{j=1}^p S_{i_j} \right| = m^c$. 
\end{proof}

\begin{lem} \label{lem: medium}
Suppose $\{e_{i_1}, \ldots, e_{i_p} \} \subseteq E(G)$ with $p=l_1+l_\mu$ and the spanning subgraph $G'$ of $G$ with edge set $\{e_{i_1}, \ldots, e_{i_p} \}$ contains a cycle with $l_1+l_\mu$ edges.  If $e_{\mu+r} \in \{e_{i_1}, \ldots, e_{i_p} \}$ for some $0 \leq r \leq t-1$, then
$$\left | \bigcap_{j=1}^p S_{i_j} \right| - \left| \bigcap_{j=1}^p B_{i_j} \right| = -x_{\mu+r}m^{n-l_1-l_\mu}.$$
\end{lem}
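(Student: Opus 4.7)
The plan is to pin down the exact structure of $G'$ and then compute $\left|\bigcap_{j=1}^p S_{i_j}\right|$ and $\left|\bigcap_{j=1}^p B_{i_j}\right|$ directly. Since $|E(G')|=p=l_1+l_\mu$ equals the length of a cycle contained in $G'$, the edge set of $G'$ is precisely that cycle and the remaining $n-(l_1+l_\mu)$ vertices of $G$ are isolated in $G'$. Every cycle in a generalized theta graph is the union of two internally disjoint $u$--$w$ paths, so the length of this cycle equals $l_s+l_t$ for some $s\neq t$. The standing parity hypothesis of this subsection forces any such sum with $s,t\geq 2$ to be even, whereas $l_1+l_\mu$ is odd; hence the cycle must pair path $l_1$ with a path of length $l_\mu$. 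Since $e_{\mu+r}$ lies in $E(G')$, the second path must be path $l_{\mu+r}$, and $G'$ is exactly this cycle together with $n-l_1-l_\mu$ isolated vertices.

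From this structural description, $G'$ has $c=1+(n-l_1-l_\mu)$ components, and so $\left|\bigcap_{j=1}^p B_{i_j}\right|=m^{c}=m\cdot m^{n-l_1-l_\mu}$ by the standard formula for $|\bigcap B_{i_j}|$ recorded earlier in this subsection.

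For $\left|\bigcap_{j=1}^p S_{i_j}\right|$ I would build an element $I$ of the intersection in two independent stages, mirroring the argument in Lemma~\ref{lem: small}. First, choose picks at the $l_1+l_\mu$ cycle vertices so that a matching edge of $H$ is realized at every cycle edge. By the standing assumption on $\mathcal{H}$, every edge of path $l_1$ and every edge of path $l_{\mu+r}$ other than $e_{\mu+r}$ already has a canonical matching, so once the pick $(u,j)\in L(u)$ is fixed, the picks at every other cycle vertex are forced to have second coordinate $j$; in particular, the pick at $v_{\mu+r,1}$ is $(v_{\mu+r,1},j)$. The only remaining constraint is that $(u,j)(v_{\mu+r,1},j)\in E_H(L(u),L(v_{\mu+r,1}))$, and the number of $j\in[m]$ for which this holds is exactly $m-x_{\mu+r}$ by the definition of $x_{\mu+r}$. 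Second, independently choose one pick from $L(v)$ for each of the $n-l_1-l_\mu$ vertices not on the cycle, giving $m^{n-l_1-l_\mu}$ possibilities. Thus $\left|\bigcap_{j=1}^p S_{i_j}\right|=(m-x_{\mu+r})\, m^{n-l_1-l_\mu}$, and subtracting the two counts yields the desired $-x_{\mu+r}m^{n-l_1-l_\mu}$.

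The only delicate point is the parity argument ruling out cycles of length $l_1+l_\mu$ formed by two paths of length $\geq l_2$; without it one would have to consider multiple possible cycle types. Once the cycle is pinned down, the counting step is routine given the canonical-labeling conventions already in force on $\mathcal{H}$.
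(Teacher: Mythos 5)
Your proof is correct and follows essentially the same route as the paper's: identify $G'$ as the cycle $C_{l_1+l_\mu}$ plus $n-l_1-l_\mu$ isolated vertices (so $\lvert\bigcap B_{i_j}\rvert = m^{n-l_1-l_\mu+1}$), then count $\lvert\bigcap S_{i_j}\rvert$ by propagating a canonical choice around the cycle, with the single possibly twisted edge $e_{\mu+r}$ contributing the factor $m-x_{\mu+r}$. Your explicit parity argument pinning down that the cycle must pair path $1$ with path $\mu+r$ is a slight elaboration of what the paper leaves implicit, but it is the same argument in substance.
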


\begin{proof}
First, notice that $G'$ consists of a copy of $C_{l_1+l_\mu}$ and $n-l_1-l_\mu$ isolated vertices.  Consequently, $\left| \bigcap_{j=1}^p B_{i_j} \right| = m^{n-l_1-l_\mu+1}$.  Suppose the components of $G'$ are $W_1, \ldots, W_{n-l_1-l_\mu+1}$ with $W_1 = C_{l_1+l_\mu}$.  

Now, following the idea of the proof of Lemma~\ref{lem: small}, notice that for each $i \in \{2, \ldots, k\}$, $V(W_i)$ contains a single element $y_i$, and there are $m$ ways to select an element of $L(y_i)$.  Also, we may assume that the vertices of $W_1$ in cyclic order are: $w_1, w_2, \ldots, w_{l_1+l_\mu} $ and $w_1w_{l_1+l_\mu} = e_{\mu+r}$.  Then, Proposition~\ref{pro: obvious} along with the fact that there are exactly $(m-x_{\mu+r})$ edges in $E_H(L(w_1),L(w_{l_1+l_\mu}))$ connecting vertices with the same second coordinate implies that there are exactly $(m-x_{\mu+r})$ ways to choose  one element from each of $L(w_1), \ldots, L(w_{l_1+l_\mu})$ so that the subgraph of $H$ induced by the set containing these chosen elements is isomorphic to $W_1$.  It follows that $\left| \bigcap_{j=1}^p S_{i_j} \right| = m^{n-l_1-l_\mu}(m-x_{\mu+r})$ which immediately implies the desired result.    
\end{proof}

Before stating the next Lemma, we let $s = \sum_{r=0}^{t-1} x_{\mu+r}$.

\begin{lem} \label{lem: large}
Suppose $\{e_{i_1}, \ldots, e_{i_p} \} \subseteq E(G)$ with $p \geq l_1+l_\mu+1$ and the spanning subgraph $G'$ of $G$ with edge set $\{e_{i_1}, \ldots, e_{i_p} \}$ contains a cycle with at least $l_1+l_\mu$ edges.  Then the following statements hold.
\\
\\
(i) If $p$ is odd, $\left | \bigcap_{j=1}^p S_{i_j} \right| - \left| \bigcap_{j=1}^p B_{i_j} \right| \leq 0$.
\\
\\
(ii)  If $p = l_1+l_\mu+1$, then $G'$ has $n-l_1-l_\mu$ components and contains exactly one cycle.  Furthermore, $\left | \bigcap_{j=1}^p S_{i_j} \right| - \left| \bigcap_{j=1}^p B_{i_j} \right| \geq  -2s m^{n-l_1-l_\mu-1}.$
\\
\\
(iii)  If $p > l_1 + l_\mu+1$ and $p$ is even, then $\left | \bigcap_{j=1}^p S_{i_j} \right| - \left| \bigcap_{j=1}^p B_{i_j} \right| \geq -m^{n-l_1-l_\mu-1}$    
\end{lem}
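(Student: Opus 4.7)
The plan is to handle the three claims separately, using Lemma~\ref{lem: basic} as the workhorse and invoking the parity hypothesis (that $l_1$ has parity opposite each of $l_2,\ldots,l_k$) to sharpen the bound where necessary. Part~(i) is simply the unconditional upper bound $\left|\bigcap_{j=1}^p S_{i_j}\right| - \left|\bigcap_{j=1}^p B_{i_j}\right| \leq 0$ already given by Lemma~\ref{lem: basic}.

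For part~(ii), the first step is to show that $G'$ has cycle rank exactly $1$ and hence $n - l_1 - l_\mu$ components. Since each cycle $C_{a,b}$ of $G$ has length $l_a + l_b$, the parity hypothesis forces any odd-length cycle to use path~$1$. The hypothesized cycle in $G'$ has length between $l_1 + l_\mu$ and $p = l_1+l_\mu+1$, so it is either odd of length $l_1+l_\mu$ (forcing it to be $C_{1,j}$ for some $j \in \{\mu,\ldots,\mu+t-1\}$) or even of length $l_1+l_\mu+1$ (forcing it to be $C_{i,j}$ with $i,j \geq 2$). In the first case exactly one extra edge remains; because a Generalized Theta graph has no chord between distinct internally-disjoint paths, this edge is either a pendant at $u$ or $w$ or is disjoint from the cycle, and in either situation it cannot produce a second cycle. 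In the second case all edges of $G'$ lie on the cycle. Next, I would compute $\left|\bigcap S_{i_j}\right|$ by propagating a value $f(u) \in [m]$ around the unique cycle using the permutations $\sigma_i$ associated with the non-canonical matchings at $e_i$ (so $\sigma_i$ fixes $m - x_i$ values). In the first sub-case the cycle permutation is $\sigma_j$, giving $|\mathrm{Fix}(\sigma_j)| = m - x_j$ and $\left|\bigcap S_{i_j}\right| - \left|\bigcap B_{i_j}\right| = -x_j\,m^{n-l_1-l_\mu-1}$ with $x_j \leq s$. In the second sub-case the cycle permutation is $\sigma_i\sigma_j^{-1}$, with $|\mathrm{Fix}(\sigma_i\sigma_j^{-1})| \geq m - x_i - x_j$; a parity check on $l_i + l_j = l_1+l_\mu+1$ using $l_i, l_j \geq l_2 \geq l_1$ and that $l_i, l_j$ both have parity opposite $l_1$ eliminates every configuration except those in which $\{i,j\} \subseteq \{\mu,\ldots,\mu+t-1\}$ or $\{i,j\}$ contains one index below~$\mu$ whose $x$-value is~$0$, and in each surviving case $x_i + x_j \leq 2s$.

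For part~(iii), I would argue that $c \leq n - l_1 - l_\mu - 1$, whereupon Lemma~\ref{lem: basic} immediately gives $\left|\bigcap S_{i_j}\right| - \left|\bigcap B_{i_j}\right| \geq -m^c \geq -m^{n-l_1-l_\mu-1}$. Writing $c = n - p + r$ where $r$ is the cycle rank of $G'$, this reduces to $p - r \geq l_1+l_\mu+1$. Since every cycle of $G$ has the form $C_{a,b}$ and requires both complete $u$-$w$ paths $P_a$ and $P_b$ to lie in $G'$, whenever $r \geq 1$ one has $r = q - 1$ where $q$ is the number of complete $u$-$w$ paths in $G'$. Two of these paths carry the hypothesized large cycle and the remaining $r - 1$ complete paths each contribute at least $l_2 \geq 2$ edges, so $p \geq l_1+l_\mu + 2(r-1)$. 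Because $l_1+l_\mu$ is odd and $p$ is even with $p > l_1+l_\mu+1$, the smallest admissible value of $p$ is $l_1+l_\mu+3$. A short case analysis---$r \in \{1,2\}$ giving $p - r \geq l_1+l_\mu+1$ directly, and $r \geq 3$ giving $p - r \geq l_1+l_\mu + r - 2 \geq l_1+l_\mu+1$---completes~(iii). The main obstacle I expect is the parity-driven case analysis that appears both at the end of~(ii) and in the structural setup of~(iii); both rely on the same mechanism (the parity opposition blocks cycles of inconvenient length from being built from paths of index~$\geq 2$), so the core work is to isolate and prove that mechanism cleanly once.
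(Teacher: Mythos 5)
Your overall architecture is the same as the paper's: part (i) from Lemma~\ref{lem: basic}, part (ii) by showing $G'$ is one cycle plus at most one extra non-chord edge and then counting the transversals of the cycle component, part (iii) by bounding the number of components and citing Lemma~\ref{lem: basic}. Your monodromy framing of (ii) is a legitimate repackaging of the paper's argument: the paper instead maps each ``bad'' diagonal $D_i$ to a twisted edge of $\mathcal{E}$ it contains and notes each edge absorbs at most two diagonals, which yields the same bound $m-2s$; your $\lvert\mathrm{Fix}(\sigma_i\sigma_j^{-1})\rvert \geq m - x_i - x_j$ together with the parity deduction that $l_i, l_j \leq l_\mu$ (so each of $x_i, x_j$ is either $0$ or counted in $s$) is exactly the point that makes $x_i + x_j \leq 2s$ go through, and is correct.

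The one step that fails as stated is the inequality $p \geq l_1 + l_\mu + 2(r-1)$ in part (iii). The ``remaining $r-1$ complete paths'' need not each contribute $l_2 \geq 2$ edges: path $1$ may be one of them (complete in $G'$ but not on the cycle you fixed), and it contributes only $l_1 \geq 1$ edges. With $l_1 = 1$ your bound degrades to $p \geq l_1 + l_\mu + 2r - 5$, and the $r \geq 3$ branch of your case analysis then gives only $p - r \geq l_1 + l_\mu$, one short of what you need. The conclusion is still true, but closing it requires the parity hypothesis rather than arithmetic alone: the extremal configuration forces $r = 3$, $l_1 = 1$, a fourth complete path of length exactly $2$, and two paths of index $\geq 2$ whose lengths sum to exactly $l_1 + l_\mu$ --- impossible since that sum is even while $l_1 + l_\mu$ is odd. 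You gesture at this mechanism in your closing remark, but it must actually be carried out here, not only in part (ii). (For what it is worth, the paper's own justification of this component count --- ``since $l_2 > 1$'' --- is equally terse and hides the same parity case.)
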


\begin{proof}
Statement~(i) immediately follows from Lemma~\ref{lem: basic}.  For Statement~(ii) suppose that $C$ is a cycle with at least $l_1+l_\mu$ edges contained in $G'$.  In the case that $C = C_{l_1+l_\mu+1}$, $E(G')=E(C)$ and it is clear that $G'$ has $n-l_1-l_\mu$ components and contains exactly one cycle.  In the case that $C = C_{l_1+l_\mu}$, $E(G')-E(C')$ contains exactly one edge, say $e$.  The fact that there is at most one edge in $G$ connecting $u$ and $w$ guarantees that the endpoints of $e$ cannot both be in $C$.  It follows that $G'$ has $n-l_1-l_\mu$ components and contains exactly one cycle.  This means that $\left| \bigcap_{j=1}^p B_{i_j} \right| = m^{n-l_1-l_\mu}$.   

Let $H' = H - \bigcup_{xy \in E(G)-E(G')} E_H(L(x),L(y))$, and suppose the components of $G'$ are $W_1, \ldots, W_{n-l_1-l_\mu+1}$, and $W_1$ is the component of $G'$ that contains $C$.  Now, we follow the idea of the procedure used in the proof Lemma~\ref{lem: small}.  For each $i \in [n-l_1-l_\mu]$ consider the component $W_i$.  Suppose $V(W_i) = \{w_1, \ldots, w_q\}$.  For $i \geq 2$, $W_i$ is a tree on at most two vertices.  So, Propositions~\ref{pro: tree} and~\ref{pro: obvious} imply that there are exactly $m$ ways to choose one element from each of $L(w_1), \ldots, L(w_q)$ so that the subgraph of $H'$ induced by the set containing these chosen elements is isomorphic to $W_i$.  Now, suppose that $i=1$. Since $W_1$ does not contain a cycle with more than $l_1+l_\mu+1$ edges, no edges from the set $\{e_j : j > \mu + t - 1 \}$ are in the cycle contained in $W_1$, and at most one edge from this set is contained in $W_1$.  If $e_{\gamma} \in \{e_j : j > \mu + t - 1 \} \cap E(W_1)$, we rename the vertices in $L(v_{\gamma,1})$ (if needed) so that $E_{H'}(L(u), L(v_{\gamma,1})) = \{(u,j)(v_{\gamma,1},j) : j \in [m] \}$.  Now, let $D_i = \{(w_j,i) : j \in [q] \}$ for each $i \in [m]$.  If $H'[D_i]$ is not isomorphic to $W_1$, then one of the elements in $D_i$ must be the endpoint of a cross edge in $H$ that connects vertices with differing second coordinates. Let $\mathcal{D}$ consist of each $D_i \in \{D_1, \ldots, D_m\}$ with the property that $H'[D_i]$ is not isomorphic to $W_1$.  Notice this means that for each $D_j \in \{D_1, \ldots, D_m\}-\mathcal{D}$, $H'[D_j]$ is isomorphic to $W_1$, and there are are least $|\{D_1, \ldots, D_m\}-\mathcal{D}|$ ways to select one element from each of $L(w_1), \ldots, L(w_q)$ so that the subgraph of $H'$ induced by the set containing these chosen elements is isomorphic to $W_1$.  Let $\mathcal{E}$ be the set of edges in $\bigcup_{r=0}^{t-1} E_{H'}(L(u),L(v_{\mu+r,1}))$ that connect vertices with differing second coordinates (note that $|\mathcal{E}|=s$).  We can construct a function $\eta: \mathcal{D} \rightarrow \mathcal{E}$ that maps each $D_i \in \mathcal{D}$ to one of the edges in $\mathcal{E}$ that has an endpoint in $D_i$.  Furthermore, if $D_i$, $D_j$, and $D_t$ are distinct elements of $\mathcal{D}$, then it is not possible for $\eta(D_i) = \eta(D_j) = \eta(D_t)$ since an edge only has two endpoints.  Consequently, $|\{D_1, \ldots, D_m\}-\mathcal{D}| \geq (m-2s)$.  So, we may conclude that $\left | \bigcap_{j=1}^p S_{i_j} \right| \geq m^{n-l_1-l_\mu}(m-2s)$, and the desired result immediately follows.

We now turn our attention to Statement~(iii).  Since $p$ is even, we know that $G'$ has at least $l_1 + l_\mu + 3$ edges.  Suppose that $C$ is a cycle with at least $l_1+l_\mu$ edges contained in $G'$.  Since $l_2 > 1$, we know that any spanning subgraph of $G$ that contains $C$ and at least $l_1+l_\mu+3$ edges has at most $(n-l_1-l_\mu - 1)$ components.  So, Statement~(iii) follows from Lemma~\ref{lem: basic}.
\end{proof}

We are now ready to establish a lower bound on $P_{DP}(G, \mathcal{H}) - P_{DP}(G,m)$ when $\mathcal{H}$ is an $m$-fold cover of $G$ with $\mu_{\mathcal{H}} > 0$.

\begin{lem} \label{lem: twist}
Suppose that $\mathcal{H}$ is an $m$-fold cover of $G$ with $\mu_{\mathcal{H}} > 0$ and $m \geq 2^{l+1}$.  Then, if $\mu = \mu_{\mathcal{H}}$, 
$$P_{DP}(G, \mathcal{H}) - P_{DP}(G,m) \geq m^{n-l_1 - l_{\mu}}- 2^{l+2}m^{n-l_1 - l_{\mu}-1}$$
\end{lem}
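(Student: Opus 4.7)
The plan is to expand $P_{DP}(G,\mathcal{H}) - P(G,m)$ as an alternating sum over edge subsets, then isolate a dominant positive contribution at $p = l_1 + l_\mu$ and control the remaining terms as error. Using the two inclusion-exclusion identities given at the start of Section~\ref{hard}, we can write
$$ P_{DP}(G,\mathcal{H}) - P(G,m) = \sum_{p=1}^{l}(-1)^{p}\sum_{1 \leq i_1 < \cdots < i_p \leq l}\left(\left|\bigcap_{j=1}^{p} S_{i_j}\right| - \left|\bigcap_{j=1}^{p} B_{i_j}\right|\right). $$
By Lemma~\ref{lem: small}, an inner difference vanishes whenever the spanning subgraph on $\{e_{i_1},\ldots,e_{i_p}\}$ has no cycle of length at least $l_1+l_\mu$; in particular, this eliminates all terms with $p < l_1+l_\mu$.

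The main contribution comes from $p = l_1 + l_\mu$. Because $l_1$ has different parity than each $l_j$ for $j \geq 2$, the integer $l_1 + l_\mu$ is odd, and since any $(l_1+l_\mu)$-edge spanning subgraph containing a cycle of length $l_1 + l_\mu$ must coincide with that cycle, the contributing subgraphs are precisely the $t$ odd cycles formed by pairing the $l_1$-path with one of the $t$ paths of length $l_\mu$. By Lemma~\ref{lem: medium}, the $p = l_1 + l_\mu$ contribution to the alternating sum equals
$$ \sum_{r=0}^{t-1}(-1)^{l_1+l_\mu}\bigl(-x_{\mu+r}\,m^{n-l_1-l_\mu}\bigr) = s\,m^{n-l_1-l_\mu} \geq m^{n-l_1-l_\mu}, $$
where the final inequality uses $s \geq x_\mu \geq 1$ from the definition of $\mu = \mu_\mathcal{H}$.

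Next I would bound the contributions for $p > l_1 + l_\mu$ using Lemma~\ref{lem: large}. Odd values of $p$ contribute non-negatively (the inner difference is non-positive by part~(i), and the overall sign is $-1$), so they may be safely discarded in a lower bound. For $p = l_1+l_\mu+1$ (which is even, since $l_1+l_\mu$ is odd), part~(ii) gives each inner difference at least $-2s\,m^{n-l_1-l_\mu-1}$, and the crude bound $\binom{l}{l_1+l_\mu+1} \leq 2^l$ yields a total contribution at least $-s\cdot 2^{l+1}\,m^{n-l_1-l_\mu-1}$. For even $p \geq l_1+l_\mu+2$, part~(iii) gives each inner difference at least $-m^{n-l_1-l_\mu-1}$, contributing in total at least $-2^l\,m^{n-l_1-l_\mu-1}$.

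Combining the main term with the three error estimates, and then invoking the hypothesis $m \geq 2^{l+1}$ together with $s \geq 1$, we obtain
$$ P_{DP}(G,\mathcal{H}) - P(G,m) \geq s\bigl(m - 2^{l+1}\bigr)m^{n-l_1-l_\mu-1} - 2^l\,m^{n-l_1-l_\mu-1} \geq \bigl(m - 3\cdot 2^l\bigr)m^{n-l_1-l_\mu-1}, $$
and the claim follows since $3\cdot 2^l \leq 2^{l+2}$. The principal difficulty lies in the middle two paragraphs: a careful parity-based enumeration is needed to confirm that the $t$ cycles of length $l_1+l_\mu$ are indeed the only contributing subgraphs at $p = l_1+l_\mu$, and then one must verify that the $s$-dependent per-term error from Lemma~\ref{lem: large}(ii) is still absorbed by the $s\,m^{n-l_1-l_\mu}$ main term once $m \geq 2^{l+1}$. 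It is precisely this balancing act that dictates the exponential threshold on $m$.
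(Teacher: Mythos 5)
Your proposal is correct and follows essentially the same route as the paper's proof: reduce via Lemma~\ref{lem: small} to $p \geq l_1+l_\mu$, extract the main term $s\,m^{n-l_1-l_\mu}$ at $p=l_1+l_\mu$ from Lemma~\ref{lem: medium}, and absorb the higher-$p$ terms using Lemma~\ref{lem: large} with the crude binomial bound $2^l$ and the hypothesis $m \geq 2^{l+1}$. Your added remarks (the parity argument showing only the $t$ cycles of length $l_1+l_\mu$ contribute at that level, and the explicit discarding of odd $p$) are correct clarifications of steps the paper leaves implicit.
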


\begin{proof}
For this proof we will assume that same set up as in Lemmas~\ref{lem: basic} through Lemma~\ref{lem: large}.  We know by Lemma~\ref{lem: small} that
\begin{align*}
P_{DP}(G, \mathcal{H}) - P_{DP}(G,m) &=  \sum_{p=1}^l (-1)^{p} \left ( \sum_{1 \leq i_1 < \cdots < i_p \leq l} \left( \left | \bigcap_{j=1}^p S_{i_j} \right| - \left| \bigcap_{j=1}^p B_{i_j} \right| \right) \right) \\
 &= \sum_{p=l_1+l_\mu}^l (-1)^{p} \left ( \sum_{1 \leq i_1 < \cdots < i_p \leq l} \left( \left | \bigcap_{j=1}^p S_{i_j} \right| - \left| \bigcap_{j=1}^p B_{i_j} \right| \right) \right).
\end{align*}
Then, the fact that each of the edges in $\{e_i : \mu \leq i \leq \mu + t-1 \}$ appears in exactly one cycle on $(l_1+l_\mu)$ vertices in $G$ along with Lemmas~\ref{lem: small} and~\ref{lem: medium} imply
$$(-1)^{l_1+l_\mu}\sum_{1 \leq i_1 < \cdots < i_{l_1+l_\mu} \leq l} \left( \left | \bigcap_{j=1}^p S_{i_j} \right| - \left| \bigcap_{j=1}^p B_{i_j} \right| \right) = \sum_{i=0}^{t-1} x_{\mu+r}m^{n-l_1-l_\mu} = sm^{n-l_1-l_\mu}.$$
Then, Lemma~\ref{lem: small} and Statement~(ii) of Lemma~\ref{lem: large} imply 
\begin{align*}
(-1)^{l_1+l_\mu+1}\sum_{1 \leq i_1 < \cdots < i_{l_1+l_\mu+1} \leq l} \left( \left | \bigcap_{j=1}^p S_{i_j} \right| - \left| \bigcap_{j=1}^p B_{i_j} \right| \right) &\geq -\binom{l}{l_1+l_\mu+1}2s m^{n-l_1-l_\mu-1} \\
&\geq -2^{l+1}sm^{n-l_1-l_\mu-1}.
\end{align*}
Then, Lemma~\ref{lem: small} and Statement (iii) of Lemma~\ref{lem: large} imply
$$\sum_{p=l_1+l_\mu+2}^l (-1)^{p} \left ( \sum_{1 \leq i_1 < \cdots < i_p \leq l} \left( \left | \bigcap_{j=1}^p S_{i_j} \right| - \left| \bigcap_{j=1}^p B_{i_j} \right| \right) \right) \geq -2^{l}m^{n-l_1-l_\mu-1}.$$
Finally, $s > 0$, $m \geq 2^{l+1}$, and the inequalities mentioned above, yield
\begin{align*}
P_{DP}(G, \mathcal{H}) - P_{DP}(G,m) &\geq sm^{n-l_1-l_\mu} -2^{l+1}sm^{n-l_1-l_\mu-1} -2^{l}m^{n-l_1-l_\mu-1} \\
&\geq m^{n-l_1 - l_{\mu}}- 2^{l+2}m^{n-l_1 - l_{\mu}-1}.
\end{align*}
\end{proof}

We are finally ready to complete the proof of Theorem~\ref{thm: generalized} Statement~(ii).

\begin{proof}
Suppose $N \in \N$ is chosen so that $m^{n-l_1 - l_j}- 2^{l+2}m^{n-l_1 - l_j-1} \geq 0$ for each $j \in \{2, \ldots, k \}$ and $m \geq N$.  Suppose $m \geq N$ and $\mathcal{H}$ is an arbitrary $m$-fold cover of $G$.  We will show that $P_{DP}(G, \mathcal{H})-P_{DP}(G,m) \geq 0$ which will immediately imply that $P_{DP}(G,m) = P(G,m).$

First, note that if $\mu_\mathcal{H} = 0$, then $P_{DP}(G, \mathcal{H})-P_{DP}(G,m) = 0$.  So, assume that $\mu_\mathcal{H}>0$, and let $\mu = \mu_{\mathcal{H}}$.  Then, Lemma~\ref{lem: twist} along with our choice of $N$ implies that $P_{DP}(G, \mathcal{H})-P_{DP}(G,m) \geq m^{n-l_1 - l_{\mu}}- 2^{l+2}m^{n-l_1 - l_{\mu}-1} \geq 0$ whenever $m \geq N$.  
\end{proof}

\section{Graphs with a Feedback Vertex Set of Order One }\label{PartialJoin}

We begin with a simple fact that will be useful in proving Theorem~\ref{thm: poly}.


\begin{lem} \label{lem: precolor}
Suppose $G_0$ is a graph, $S \subseteq V(G_0)$, $n \geq |V(G_0)|$, and $g: S \rightarrow [n]$.  Let $T_m$ be the set of all proper $m$-colorings $f$ of $G_0$ such that $f(v) = g(v)$ for each $v \in S$. Then, $\lvert T_m\rvert = p(m)$ where $p(m)$ is a polynomial in $m$ for all $m \geq n$.
\end{lem}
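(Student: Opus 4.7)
My plan is to prove this by inclusion-exclusion over the edges of $G_0$, in direct analogy with the standard derivation of the chromatic polynomial. First, define the ``precolored universe''
$$X_m = \{ f : V(G_0) \to [m] : f(v) = g(v) \text{ for all } v \in S \}.$$
The hypothesis $m \geq n$ is exactly what is needed here: it guarantees that every value $g(v) \in [n]$ actually lies in the palette $[m]$, so $X_m$ is a well-defined set of size $m^{|V(G_0)|-|S|}$, and $T_m$ consists of precisely those $f \in X_m$ that are proper. For each edge $e = uv \in E(G_0)$, let $B_e \subseteq X_m$ be the set of $f$ with $f(u)=f(v)$. Then $T_m = X_m \setminus \bigcup_{e} B_e$, and inclusion-exclusion gives
$$|T_m| = \sum_{E' \subseteq E(G_0)} (-1)^{|E'|} \left| X_m \cap \bigcap_{e \in E'} B_e \right|.$$

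The core step is evaluating each inner cardinality. An element of $X_m \cap \bigcap_{e \in E'} B_e$ is exactly an extension of $g$ that is constant on each connected component of the spanning subgraph $(V(G_0), E')$. Fix such an $E'$ and let $C_1, \ldots, C_r$ be its components. If some $C_i$ contains two vertices of $S$ on which $g$ disagrees, the inner cardinality is $0$; otherwise $g$ is ``consistent'' on $(V(G_0),E')$ and the value of $f$ is forced on each component meeting $S$, while it can be chosen freely from $[m]$ on each component disjoint from $S$. Writing $k_0(E')$ for the number of components of $(V(G_0),E')$ disjoint from $S$, the inner cardinality is therefore $m^{k_0(E')}$ in the consistent case and $0$ otherwise. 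Substituting gives $|T_m| = p(m)$, where
$$p(m) = \sum_{\substack{E' \subseteq E(G_0) \\ g \text{ consistent on } (V(G_0), E')}} (-1)^{|E'|} m^{k_0(E')}.$$
Since the indexing set and the exponents $k_0(E') \leq |V(G_0)| \leq n$ are independent of $m$, $p(m)$ is a polynomial in $m$ of degree at most $|V(G_0)|$, which completes the argument.

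There is no serious obstacle in this plan. The only point requiring care is the quantifier on $m$: the combinatorial identity above genuinely depends on the precolored values being legitimate colors, so one must verify that the hypothesis $m \geq n$ is used precisely to guarantee $g(S) \subseteq [m]$, and hence that $X_m$ and the $B_e$'s carry the intended meaning. Everything else is bookkeeping: the inclusion-exclusion is formal, and the component analysis is a routine case split on whether $g$ is consistent on the spanning subgraph induced by $E'$.
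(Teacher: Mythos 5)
Your proof is correct, but it takes a genuinely different route from the paper's. You run a Whitney-style inclusion--exclusion directly over the edge subsets $E' \subseteq E(G_0)$, observing that each term equals $m^{k_0(E')}$ when $g$ is constant on every component of $(V(G_0),E')$ meeting $S$ and vanishes otherwise, and that both the consistency condition and the exponent $k_0(E')$ are independent of $m$ once $m \geq n$ guarantees $g(S)\subseteq [m]$. The paper instead first disposes of the degenerate case $|T_n|=0$, then adds edges so that $S$ becomes a clique, contracts each color class $g^{-1}(i)$ to a single vertex $v_i^*$, and uses the symmetry of proper colorings on a clique to write $|T_m| = P(G^*,m)\prod_{i=0}^{s-1}\frac{1}{m-i}$, which is a polynomial because $P(G^*,m)$ is divisible by the falling factorial coming from the $s$-clique $\{v_i^*\}$. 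Your argument is more elementary and self-contained (it needs no contraction, no divisibility fact, and handles the inconsistent case automatically since the identity forces every term with an inconsistent component to zero), and it produces an explicit expansion of $p(m)$ of degree at most $|V(G_0)|-|S|$. The paper's argument buys a structural identification of $|T_m|$ as a normalized chromatic polynomial of a minor, at the cost of the extra case analysis and the appeal to the clique-divisibility property. Both proofs use the hypothesis $m \geq n$ in exactly the same way, namely to ensure the prescribed colors are available in the palette.
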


\begin{proof}
First, note that $|T_n| = 0$ if and only if there are $x,y \in S$ such that $xy \in E(G_0)$ and $g(x)=g(y)$.  Consequently, when $|T_n|=0$, we have that $|T_m|=0$ whenever $m \geq n$, and the result easily follows.  So, we assume that $|T_n| > 0$.

Let $G_1$ be the graph obtained by adding edges to $G_0$ so that the vertices in $S$ form a clique. Suppose $G^*$ is the graph obtained by contracting each edge $uv\in E(G_1)$ with $u,v\in S$ and $g(u) = g(v)$ (note that since $|T_n|>0$, it is only possible to contract edges in $E(G_1)-E(G_0)$). Suppose that $s = \lvert g(S) \rvert$. For each $i\in g(S)$, let $v_i^*$ be the vertex in $G^*$ obtained after contracting the edges in $G_1[g^{-1}(i)]$. Suppose $C_m$ is the set of proper $m$-colorings of $G^*$ where $m \geq n$ and $v_i^*$ is colored with $i$ for each $i\in g(S)$. Since $B = \{v_i^*: i\in g(S)\}$ is a clique in $G^*$, it is clear that $\lvert C_m\rvert = P(G^*,m)\prod_{i=0}^{s-1}\frac{1}{m-i}$. Let $h: T_m \rightarrow C_m$ be the function defined by $h(f) = c$ where $c: V(G^*)\rightarrow [m]$ is the function given by
	\[c(v) = 
	\begin{cases}
		f(v) &\text{if $v\notin B$}\\
		i  &\text{if $v = v_i^*\in B$}.\\
	\end{cases}
	\]
	Since $h$ is a bijection, we have that \[\lvert T_m\rvert = P(G^*,m)\prod_{i=0}^{s-1}\frac{1}{m-i}.\]
	But because $G^*$ contains a clique of size $s$, we have that $P(G^*,m) = \left(\prod_{i=0}^{s-1}(m-i)\right) p(m)$ where $p(m)$ is a polynomial; therefore, $\lvert T_m \rvert = p(m)$.
\end{proof}

Now, we are ready to prove Theorem~\ref{thm: poly}. Our proof of Theorem~\ref{thm: poly} is a generalization of the proof of Lemma~\ref{lem: complex} in~\cite{KM19} as we consider a decomposition of the graph $G$ into a star $G_1$ and a spanning forest $G_0$ rather than just an induced $K_{1,2}$ and a forest. The problem then reduces to carefully counting the number of $\mathcal{H}_0$-colorings of $G_0$ that are not $\mathcal{H}$-colorings of $G$, where $\mathcal{H}_0$ is the $m$-fold cover of $G_0$ induced by a given $m$-fold cover $\mathcal{H}$ of $G$.

\begin{proof}
The result clearly holds when $G$ has no edges.  So, there is a star $G_1 = K_{1,k-1}$ with bipartition $\{\alpha_1\}, \{\alpha_2, \ldots, \alpha_k\}$ contained in $G$ such that the subgraph $G_0 = G - E(G_1)$ is a forest. 
	
We first establish a lower bound on $P_{DP}(G, m)$. Suppose $\mathcal{H} = (L,H)$ is an $m$-fold cover of $G$ with $m\geq \lvert V(G)\rvert$. We may assume that $\mathcal{H}$ is a full cover since adding edges to $H$ cannot increase the number of $\mathcal{H}$-colorings of $G$. Let $H_0 = H - \bigcup_{uv\in E(G_1)} E_H(L(u), L(v))$ so that $\mathcal{H}_0 = (L,H_0)$ is an $m$-fold cover of $G_0$. Then, by Proposition~\ref{pro: tree}, there exists a canonical labeling of $\mathcal{H}_0$; therefore, we can let $L(v) = \{(v, j) : j \in [m]\}$ so that for each $uv \in E(G_0)$, $(u, j)$ and $(v, j)$ are adjacent in $H_0$ for each $j \in [m]$.  Note that $P_{DP}(G, \mathcal{H}) = P_{DP}(G_0, \mathcal{H}_0) - |\mathcal{A}| = P(G_0, \mathcal{H}_0) - |\mathcal{A}|$ where $\mathcal{A}$ is the set of all $\mathcal{H}_0$-colorings of $G_0$ that are not $\mathcal{H}$-colorings of $G$.  So, we seek to understand $\mathcal{A}$.
	
Let $\mathcal{P} = \{P_1, \ldots, P_t\}$ be the set of all partitions of $V(G_1)$, and let $P = \{A_1, \ldots, A_k\}$ be an arbitrary element of $\mathcal{P}$. Let $W = \{ (v, i) : v \in A_i, A_i \in P \}$.  Since $P$ is a partition, notice that for any $v \in V(G_1)$, $v$ is the first coordinate of exactly one element in $W$.  We will call this element $(v,c_v)$.  Since $|P|=k \leq |V(G)|$, we can let $g_{_P} : V(G_1) \rightarrow [|V(G)|]$ be the $|V(G)|$-coloring of $G_1$ defined by $g_{_P}(v) = c_v$ for each $v \in V(G_1)$. 

For each $i \in \{2, \ldots, k\}$, let $B^{(m)}_{g_{_P(\alpha_i)}}$ be the set of all proper $m$-colorings of $G_0$ that color $\alpha_1$ with $g_{_P}(\alpha_1)$ and $\alpha_i$ with $g_{_P}(\alpha_i)$. By Lemma~\ref{lem: precolor}, we know that for any $I \subseteq \{2, \ldots, k\}$, $|\bigcap_{i \in I} B^{(m)}_{g_{_P(\alpha_i)}}|$ is a polynomial in $m$.  So, by the inclusion-exclusion principle, we know that the number of proper $m$-colorings of $G_0$ that color $\alpha_1$ with $g_{_P}(\alpha_1)$ and $\alpha_i$ with $g_{_P}(\alpha_i)$ for some $i \in \{2, \ldots, k\}$, which is given by $\lvert \bigcup_{i=2}^k B_{g_{_P}(\alpha_i)}^{(m)}\rvert$, is a polynomial in $m$ which we denote by $p_{_P}(m)$.
		
Let $H_1$ be the graph with $V(H_1) = \bigcup_{v\in V(G_1)}L(v)$ and $E(H_1) = \bigcup_{uv\in E(G_1)}E_H(L(u),L(v))$. Clearly $H_1$ can be decomposed into $m$ vertex-disjoint copies $W_1, \ldots, W_m$ of $G_1$.  Let $V(W_i) = \{(\alpha_j, c^{(i)}_j) : j \in [k]\}$. Consider any $\mathcal{H}_0$-coloring $I$ of $G_0$. The only way $I$ is not an $\mathcal{H}$-coloring of $G$ is if $H[I]$ contains at least one edge from $W_{\ell}$ for some $\ell \in [m]$. Let $\mathcal{A}_{\ell}$ be the set of every $\mathcal{H}_0$-coloring $A$ of $G_0$ where $H[A]$ contains at least one edge of $W_{\ell}$.  This means $\mathcal{A} = \bigcup_{i=1}^m A_i$.  Furthermore, since $\mathcal{A}_1, \ldots, \mathcal{A}_m$ are pairwise disjoint, $P_{DP}(G, \mathcal{H}) = P(G_0, m) - \sum_{i=1}^m |\mathcal{A}_i|$.
	
We now prove that $|\mathcal{A}_{\ell}|$ is a polynomial in $m$ for all $\ell \in [m]$.  Define an equivalence relation on $V(W_{\ell})$ such that $(\alpha_i, c_i^{(\ell)}), (\alpha_j, c_j^{(\ell)}) \in V(W_{\ell})$ are related if and only if $c_i^{(\ell)} = c_j^{(\ell)}$. Let the partition of $V(W_{\ell})$ induced by this equivalence relation be $P_{W_{\ell}}$. Clearly, there exists a unique $P_{s_{\ell}} = \{A_1, \hdots, A_k\} \in \mathcal{P}$ with the property that it is possible to let $P_{W_{\ell}} = \{B_1, \ldots, B_k\}$ so that for any $(\alpha_r, c_r^{(\ell)}) \in V(W_{\ell})$, $\alpha_r \in A_i$ if and only if $(\alpha_r, c_r^{(\ell)}) \in B_i$. Let $C : V(G_1) \rightarrow [k]$ be defined such that for each $i,j\in [k]$, $C(\alpha_j) = i$ if and only if $\alpha_j \in A_i$. Because there is a canonical labeling of $\mathcal{H}_0$, we can reinterpret $|\mathcal{A}_{\ell}|$ to be the number of proper $m$-colorings of $G_0$ that color $\alpha_1$ with $c_1^{(\ell)}$ and $\alpha_r$ with $c_r^{(\ell)}$ for some $r \in \{2, \ldots, k\}$.  This is the same as the number of proper $m$-colorings of $G_0$ that color $\alpha_1$ with $C(\alpha_1)$ and $\alpha_r$ with $C(\alpha_r)$ for some $r \in \{2, \ldots, k\}$.  So, $|\mathcal{A}_{\ell}|$ precisely matches our definition $p_{_{P_{s_{\ell}}}}(m)$ which means $|\mathcal{A}_{\ell}| = p_{_{P_{s_{\ell}}}}(m)$.
	
Using the facts we have obtained thus far, we have
	\begin{align*}
		P_{DP}(G, \mathcal{H})
		&= P(G_0, m) - \sum_{i=1}^m |\mathcal{A}_i|\\
		&= P(G_0, m) - \sum_{i=1}^m p_{_{P_{s_i}}}(m)\\
		&\geq P(G_0, m) - m \max\{p_{_P}(m) : P \in \mathcal{P}\}.
	\end{align*}
There exists an $N \geq |V(G)|$ and $P_{\mu} \in \mathcal{P}$ such that $\max\{p_{_P}(m) : P \in \mathcal{P}\}$ is equal to $p_{_{P_\mu}}(m)$ for all $m\geq N$. Therefore, we have that $P_{DP}(G,m) \geq P(G_0,m) - mp_{_{P_\mu}}(m)$ for all $m\geq N$.
	
	Suppose $m \geq N$.  Now, we show that this bound is tight by constructing an $m$-fold cover of $G$, $\mathcal{H}^* = (L^*,H^*)$ such that $P_{DP}(G, \mathcal{H}^*) = P(G_0,m) - mp_{_{P_\mu}}(m)$. Let $L^*(v) = \{(v, j): j\in \{0\}\cup[m-1]\}$ for all $v\in V(G)$, and let $V(H^*) = \bigcup_{v \in V(G)}L^*(v)$. First draw edges in $H^*$ such that $L^*(v)$ is a clique for all $v\in V(G)$. Suppose that $P_{\mu} = \{A_0, \hdots, A_{k-1}\}$. Without loss of generality, let $\alpha_1\in A_0$. For each $\alpha_{\ell}\in A_r$, draw the edges in $\{(\alpha_1, j)(\alpha_{\ell}, r + j): j\in \{0\}\cup[m-1]\}$, where we perform addition mod $m$. For each $uv\in E(G) - E(G_1)$, draw the edges in $\{(u, j)(v, j): j \in \{0\}\cup[m-1]\}$. This completes our construction, and it is easy to see from the above argument that $P_{DP}(G, \mathcal{H}^*) = P(G_0,m) - mp_{P_{\mu}}(m)$. Thus, we can conclude that $P_{DP}(G,m) = P(G_0,m) - mp_{P_{\mu}}(m)$.
\end{proof}

{\bf Acknowledgment.}  This paper is a combination of research projects conducted with undergraduate students: Charlie Halberg, Andrew Liu, Paul Shin, and Seth Thomason at the College of Lake County during the summer of 2020. The support of the College of Lake County is gratefully acknowledged.  The authors would also like to thank the anonymous referee whose comments helped the readability of this paper.

\end{document}